\documentclass[11pt]{amsart}

\usepackage{graphicx, pinlabel}
\newtheorem{theorem}{Theorem}[section]
\newtheorem{proposition}[theorem]{Proposition}
\newtheorem{lemma}[theorem]{Lemma}
\newtheorem{corollary}[theorem]{Corollary}
\newtheorem*{maxrootcorollary}{Corollary~\ref{coro:maxroot}}
\newtheorem*{almostallcorollary}{Corollary~\ref{coro:almostall}}

\theoremstyle{definition}

\numberwithin{equation}{section}

\newcommand{\longpage}{\enlargethispage{\baselineskip}}




\newcommand{\Mod}{\operatorname{Mod}}
\newcommand{\lcm}{\operatorname{lcm}}
\renewcommand{\O}{\operatorname{{\mathcal O}}}


\begin{document}

\title[Roots of Dehn twists]
{Roots of Dehn twists}

\author{Darryl McCullough}
\address{Department of Mathematics\\
University of Oklahoma\\
Norman, Oklahoma 73019\\
USA} 
\email{dmccullough@math.ou.edu}
\urladdr{www.math.ou.edu/$_{\widetilde{\phantom{n}}}$dmccullough/}
\thanks{The first author was supported in part by NSF grant DMS-0802424}

\author{Kashyap Rajeevsarathy}
\address{Department of Mathematics\\
University of Oklahoma\\
Norman, Oklahoma 73019\\
USA} 
\email{kashyap@math.ou.edu}
\subjclass[2000]{Primary 57M99; Secondary 57M60}

\date{\today}

\keywords{surface, mapping class, Dehn twist, nonseparating, curve, root}

\begin{abstract}
D. Margalit and S. Schleimer found examples of roots of the Dehn twist
$t_C$ about a nonseparating curve $C$ in a closed orientable surface, that
is, homeomorphisms $h$ such that $h^n=t_C$ in the mapping class group.  Our
main theorem gives elementary number-theoretic conditions that describe the
$n$ for which an $n^{th}$ root of $t_C$ exists, given the genus of the
surface. Among its applications, we show that $n$ must be odd, that the
Margalit-Schleimer roots achieve the maximum value of $n$ among the roots
for a given genus, and that for a given odd $n$, $n^{th}$ roots exist for
all genera greater than $(n-2)(n-1)/2$. We also describe all $n^{th}$ roots
having $n$ greater than or equal to the genus.
\end{abstract}

\maketitle

A natural question about mapping class groups is whether a Dehn twist has a
root. That is, given a Dehn twist $t_C$ about a simple closed curve $C$ in
a closed orientable surface $G$ and an integer degree $n>1$, does there
exist an orientation-preserving homeomorphism $h$ with $h^n=t_C$ in the
mapping class group $\Mod(G)$? It is easy to find examples of roots when
$C$ is a separating curve, but for a nonseparating curve it is not
immediately apparent that roots exist. Note that since all nonseparating
curves are equivalent under homeomorphisms of $G$, the question is
independent of the particular curve used.

Recently some beautiful examples of such roots were found by D. Margalit
and S. Schleimer~\cite{MS}. They constructed roots of degree $2g+1$ for the
Dehn twist $t_{g+1}$ about a nonseparating curve in the surface of genus
$g+1\geq 2$. We will describe those examples from our viewpoint in
Section~\ref{sec:Margalit-Schleimer}, after stating and proving our main
result Theorem~\ref{thm:main} in Section~\ref{sec:main}.

Theorem~\ref{thm:main} says that given $g$ and $n$, $t_{g+1}$ has a root of
degree $n$ if and only if there exists a collection of integers satisfying
certain equations. In fact, the conjugacy classes of roots correspond to
the solutions. Its proof is an exercise in the well-studied theory of
group actions on surfaces. We present it using the language of orbifolds
(see W. Thurston~\cite[Chapter 13]{Thurston}), rather than the classical
description that uniformizes the action and then works with the lifted
isometries of the hyperbolic plane.

A number of applications can be obtained from Theorem~\ref{thm:main} by
elementary considerations. An immediate consequence is
\begin{maxrootcorollary}
Suppose that $t_{g+1}$ has a root of degree $n$. Then
\begin{enumerate}
\item[(a)] $n$ is odd.
\item[(b)] $n\leq 2g+1$.
\end{enumerate}
\end{maxrootcorollary}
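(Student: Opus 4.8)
The plan is to invoke Theorem~\ref{thm:main} and then argue by elementary number theory; once its equations are on the table, both parts are immediate. Given a degree‑$n$ root $h$ of $t_{g+1}$ (so $h^n=t_C$ for a nonseparating curve $C$ in the genus‑$(g+1)$ surface $G$), cut $G$ along $C$ and cap the two resulting boundary circles with disks to get a closed genus‑$g$ surface $S$ with marked points $q_1,q_2$ (the disk centres); the induced mapping class $\bar h$ of $S$ satisfies $\bar h^{\,n}=1$, hence by Nielsen realization is carried by a periodic homeomorphism. Since $h$ commutes with $t_C=h^n$ it cannot reverse the co‑orientation of $C$ — otherwise $t_C$ would be conjugate to $t_C^{-1}$, forcing the contradiction $t_C^2=1$ — so $\bar h$ fixes each of $q_1,q_2$ and has order exactly $n$. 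Thus the quotient orbifold has underlying genus $g_0\ge 0$ and two distinguished cone points of order exactly $n$, with rotation parameters $c_1,c_2$ satisfying $\gcd(c_i,n)=1$, together with possibly further cone points of orders $m_3,\dots,m_k$ dividing $n$. The equations of Theorem~\ref{thm:main} then amount to a Riemann--Hurwitz identity, which rearranges to
\[
 2g \;=\; 2ng_0 \;+\; \sum_{j=3}^{k}\Bigl(n-\frac{n}{m_j}\Bigr),
\]
a realizability congruence asserting that the monodromies around the cone points sum to $0$ in $\Z/n$, and a ``twisting'' congruence $c_1+c_2\equiv\pm 1\pmod n$ recording that $h^n$ is a \emph{single} Dehn twist rather than a higher power of one.

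Part (a) is then one line: if $n$ were even, $\gcd(c_i,n)=1$ would make $c_1$ and $c_2$ both odd, hence $c_1+c_2$ even; but any integer congruent to $\pm1$ modulo an even number is odd. So $n$ is odd.

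For part (b) I would work from the displayed genus identity. If $g_0\ge 1$ then $2g\ge 2n$, so $n\le g<2g+1$. Suppose $g_0=0$, and let $r=k-2$ be the number of extra cone points; since $g\ge 1$ we must have $r\ge 1$. If $r\ge 2$, then each summand is at least $n-n/2$, so $2g\ge n$ and hence $n\le 2g+1$. If $r=1$, the realizability congruence gives $e_3\equiv-(d_1+d_2)\pmod n$ for the single extra monodromy $e_3$, while the twisting congruence forces the sum $d_1+d_2$ of the two (unit) monodromies at the order‑$n$ cone points to be a unit of $\Z/n$; hence $\gcd(e_3,n)=1=n/m_3$, so $m_3=n$, $2g=n-1$, and $n=2g+1$. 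In every case $n\le 2g+1$, with equality exactly in this last configuration — which is the Margalit--Schleimer one.

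There is no serious obstacle here: Theorem~\ref{thm:main} does all the work and the corollary is essentially bookkeeping. The one thing to get exactly right is the precise shape of the congruences it furnishes — in particular that the twisting congruence pins $c_1+c_2$ to the \emph{unit} value $\pm1$ (not to $0$), which is where it matters that $t_C$ is a primitive twist — since part (a) uses that this value is odd and the equality case of part (b) uses that it is a unit.
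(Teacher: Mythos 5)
Your proof is correct and follows essentially the paper's route: extract the data-set structure supplied by Theorem~\ref{thm:main}, then deduce both parts by elementary number theory from the genus identity together with conditions (iii) and (iv). For (a), your argument is the paper's observation, merely phrased in terms of the inverse rotation parameters $a^{-1},b^{-1}$: condition (iii) with $a,b$ units modulo $n$ is a contradiction modulo $2$ when $n$ is even. For (b), your direct case analysis on $g_0$ and the number of extra cone points is logically the contrapositive of the paper's proof by contradiction (assume $n>2g+1$, deduce $g_0=0$, $m=1$, $n_1<n$, then contradict (iii)--(iv)); you land on the same key fact, that $a+b$ is a unit modulo $n$ and hence cannot be divisible by $n/n_1$ unless $n_1=n$, in which case $n=2g+1$.

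One caveat on the expository setup you included, which is not load-bearing for the corollary since it is already part of the content of Theorem~\ref{thm:main}: your justification that $h$ cannot interchange the sides of $C$ (``otherwise $t_C$ would be conjugate to $t_C^{-1}$, forcing $t_C^2=1$'') is incorrect. An orientation-preserving homeomorphism $h$ with $h(C)=C$ always satisfies $ht_Ch^{-1}=t_C$, regardless of whether it swaps the two sides of $C$ (and, necessarily then, reverses the orientation of $C$), because the handedness of a Dehn twist is determined by the orientation of the ambient surface alone --- for instance, the hyperelliptic involution of the torus swaps the sides of a meridian yet commutes with the twist about it. The paper's actual argument, near the end of the proof of Theorem~\ref{thm:main}, is that if $h$ swapped sides then its degree would be even, say $2n$, and a direct analysis of the induced rotation on the annulus shows that $h^{2n}$ is necessarily an \emph{even} power $t_C^{2k+2jn}$ of $t_C$, which can never equal $t_C$.
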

\noindent Thus the Margalit-Schleimer roots always have the maximum degree
among the roots of $t_{g+1}$ for a given genus.

Section~\ref{sec:genus_set} concerns the set of $g\geq 0$ for which
$t_{g+1}$ has a root of degree a fixed $n$. This set always contains all
but at most $(n-1)^2/4$ values, those in a set $T(n)$ which is easy to
describe, and whose maximum element is $n(n-3)/2$:
\begin{almostallcorollary} 
For $n$ odd, $t_{g+1}$ has a root of degree $n$ whenever $g\notin T(n)$.
Consequently, $t_{g+1}$ has a root of degree $n$ for all
$g+1>(n-2)(n-1)/2$.\par
\end{almostallcorollary}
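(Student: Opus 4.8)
The plan is to deduce the corollary from Theorem~\ref{thm:main} by exhibiting, for each odd $n$ and each $g\notin T(n)$, an explicit solution of the equations in that theorem. Unwinding the description of $T(n)$, to say that $g\notin T(n)$ is to say that $g=j\,\frac{n-1}{2}+n h_0$ for some integers $j\ge 1$ and $h_0\ge 0$; in particular every element of $T(n)$ is at most $n(n-3)/2$. The second sentence of the corollary is then immediate: $(n-2)(n-1)/2=n(n-3)/2+1$, so $g+1>(n-2)(n-1)/2$ forces $g>n(n-3)/2\ge\max T(n)$, hence $g\notin T(n)$, and the first sentence applies.

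For the first sentence, suppose $g=j\,\frac{n-1}{2}+n h_0$ with $j\ge 1$ and $h_0\ge 0$. I would produce the required solution geometrically as a branched cyclic cover. Let $\Sigma$ be a closed orientable surface carrying a $\mathbb{Z}_n$-action whose quotient orbifold is a genus-$h_0$ surface with $j+2$ cone points, all of order $n$; concretely, this amounts to choosing branch monodromies $c_1,\dots,c_{j+2}\in(\mathbb{Z}/n)^{\times}$ with $\sum_i c_i\equiv 0\pmod n$ and, for the first two, $c_1+c_2\equiv -1\pmod n$ (the precise residue being dictated by the orientation conventions in Theorem~\ref{thm:main}). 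Such a choice exists precisely because $n$ is odd: choose $c_1$ with both $c_1$ and $c_1+1$ prime to $n$ (possible, since every prime divisor of $n$ is at least $3$, by the Chinese Remainder Theorem), set $c_2\equiv -1-c_1$, and then take $c_3,\dots,c_{j+2}$ to be units summing to $1\bmod n$ (for $j=1$ a single unit works; for $j\ge 2$ use that every residue modulo an odd $n$ is a sum of two units). The Harvey realizability conditions hold — all cone orders equal $n$, so their least common multiple is $n$ and stays $n$ when any one is deleted, and there are at least $3$ of them when $h_0=0$ — so the action exists, and Riemann--Hurwitz gives
\[
2-2g(\Sigma)=n\bigl((2-2h_0)-(j+2)(1-\tfrac1n)\bigr)=2-2nh_0-j(n-1),
\]
so $g(\Sigma)=nh_0+j\,\frac{n-1}{2}=g$.

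Now I would invoke Theorem~\ref{thm:main}. The two points of $\Sigma$ lying over the first two branch points are fixed by the $\mathbb{Z}_n$-action, with rotation-number numerators summing to $-1\pmod n$; this is exactly the data the theorem converts into a degree-$n$ root of $t_{g+1}$ — remove small invariant disks about these two points, glue in an annulus carrying a $\tfrac1n$-fractional twist, and the resulting homeomorphism $\varphi$ of the genus-$(g+1)$ surface satisfies $\varphi^{\,n}=t_C$, where $C$ is the (nonseparating) core of the glued annulus. Hence $t_{g+1}$ has a root of degree $n$.

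The only delicate point is the arithmetic: one must know that the set of genera $\{\,nh_0+j\,\frac{n-1}{2}:j\ge 1,\ h_0\ge 0\,\}$ is precisely the complement of $T(n)$, with $n(n-3)/2$ as the largest omitted genus and at most $(n-1)^2/4$ omitted genera overall. Since $\frac{n-1}{2}$ is a unit modulo $n$ (any common divisor of $\frac{n-1}{2}$ and $n$ divides $\gcd(n-1,n)=1$), the residues $j\,\frac{n-1}{2}\bmod n$ for $1\le j\le n$ exhaust $\mathbb{Z}/n$, and in the residue class of $0$ the first attainable genus is $n(n-1)/2$ (at $j=n$), leaving $n(n-3)/2$ at the top of $T(n)$; a similar count bounds $|T(n)|$. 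This bookkeeping — carried out in the description of $T(n)$ preceding the corollary — is really the whole content, the geometric input from Theorem~\ref{thm:main} being invoked just once.
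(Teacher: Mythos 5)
Your overall strategy is the paper's own: use ``primary'' data sets, i.e.\ quotient orbifolds all of whose cone points (including the two distinguished ones) have order $n$, so that the genus formula collapses to $g = n g_0 + m\cdot\frac{n-1}{2}$; then observe that the set of genera attained in this way is exactly the complement of $T(n)$, and that $\max T(n)=n(n-3)/2$ gives the stable range. Your Riemann--Hurwitz computation, your description of the complement of $T(n)$, and your reduction of the second sentence to the first are all correct.

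The problem is the pair of arithmetic constraints you impose on the $c_i$. You require the branch monodromy exponents to satisfy $\sum_i c_i\equiv 0$ and $c_1+c_2\equiv -1\pmod n$, and you construct a tuple of units meeting those two linear conditions. But Theorem~\ref{thm:main} does not ask for $a+b$ to be a fixed residue; condition~(iii) is the \emph{multiplicative} relation $a+b\equiv ab\pmod n$, equivalently $a^{-1}+b^{-1}\equiv 1\pmod n$. Your explicit choice ($c_1$ and $c_1+1$ units, $c_2=-1-c_1$) satisfies $c_1+c_2=-1$ but not $c_1+c_2=c_1c_2$ in general, so the tuple you build is not a data set and Theorem~\ref{thm:main} does not apply to it. The discrepancy is not a matter of orientation convention: if you instead interpret your $c_i$ as rotation-number numerators rather than monodromy exponents, then $c_1+c_2\equiv\pm 1$ does match condition~(iii), but now $\sum_i c_i\equiv 0$ is no longer the cover-existence condition, which in those variables reads $\sum_i c_i^{-1}\equiv 0\pmod n$. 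You cannot satisfy both of your equations simultaneously under a single consistent interpretation of the $c_i$.

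The fix is easy and is what the paper does: take $a=b=2$, which does satisfy $a+b=ab\bmod n$ for every odd $n$, and then choose the remaining $c_1,\dots,c_m$ (your $c_3,\dots,c_{j+2}$) to be units with $\sum c_i\equiv -4\pmod n$; such a choice exists for every $m\geq 1$ since one can always draw from $\{-4,-2,1,-1\}$. With that correction your proof is essentially the paper's. As a side remark, once you make an explicit choice of monodromy tuple summing to $0$, the appeal to Harvey's realizability theorem is unnecessary: the cover corresponding to the kernel of $\rho$ exists directly, which is how the paper phrases it.
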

\noindent For prime $n$, $t_{g+1}$ has no root of degree $n$ exactly when
$g\in T(n)$, but when $n$ is not prime, roots of degree $n$ may occur
when $g\in T(n)$.

In general, it is hard to use Theorem~\ref{thm:main} to work out the full
set of degrees of roots of $t_{g+1}$ for a given genus, although our
results allow easy computation of the possible prime degrees of roots.
Roots of large degree are rather limited, however, and in
Theorem~\ref{thm:largeroots} of Section~\ref{sec:de} we describe the set of
roots having degree $n\geq g$. Curiously, there is a root of degree $g$
only when $g+1=4$. For $g\geq 2$ and degrees satisfying $g+1\leq n < 6(g +
2)/5$, all roots are of a restricted type that we call $(d,e)$-roots, and
the only larger degree is that of the Margalit-Schleimer roots, $2g+1$. The
genera for which $t_{g+1}$ has a $(d,e)$-root of a given degree $n$ are
easily found from a prime factorization of $n$, and the $n$ for which a
given genus has a $(d,e)$-root of degree $n$ can be calculated on a desktop
computer using GAP~\cite{GAP} for genera up to~1,000,000.

As shown in Figure~\ref{fig:rootset}, we can combine these results to get a
sense of the set of pairs $(g,n)$ for which there is a root of degree $n$
for $t_{g+1}$.
\begin{figure}
\labellist
\small
\pinlabel $n$ [B] at -1 310
\pinlabel $30$ [B] at -9 276
\pinlabel $5$ [B] at -5 50
\pinlabel $n=2g+1$ [B] at 140 320
\pinlabel $n=6(g+2)/5$ [B] at 240 320
\pinlabel $n=g$ [B] at 320 320
\pinlabel $g=(n-2)(n-1)/2$ [B] at 365 118
\pinlabel $g$ [B] at 442 -2
\pinlabel $5$ [B] at 54 -9
\pinlabel $45$ [B] at 414 -9
\endlabellist
\centering
\includegraphics[width=65 ex]{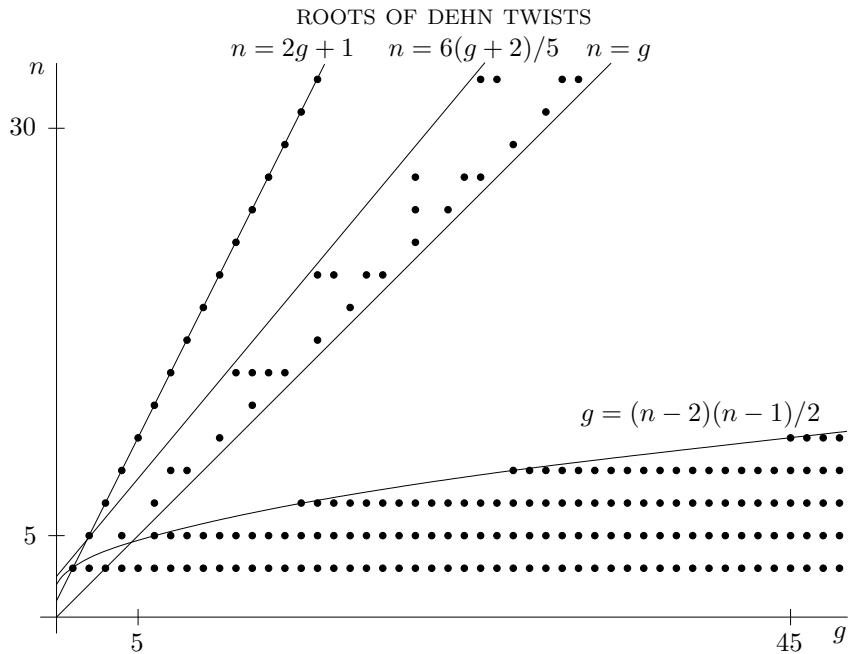}
\caption{Some of the $(g,n)$ pairs in the rectangle $[0,48]\times [0,33]$
for which $t_{g+1}$ has a root of degree $n$. The Margalit-Schleimer
pairs lie on $n=2g+1$. Below this is a region $6(g+2)/5\leq n<2g+1$ with
no pairs, then the region $g<n<6(g+2)/5$ of $(d,e)$-root pairs (except
for $(1,3)$, which is a Margalit-Schleimer pair). The $(d,e)$-root pairs
shown here are accurate. The pair $(3,3)$ is the only element of the
degree set that lies on $n=g$. In the stable region $3\leq n$ and
$(n-2)(n-1)/2\leq g$, every pair $(g,n)$ with $n$ odd occurs.  In the
region above $g=(n-2)(n-1)/2$, the primary roots (see
Section~\ref{sec:genus_set}) asymptotically give about half of the pairs
with $n$~odd.}
\label{fig:rootset}
\end{figure}

Our definition of roots requires them to be orientation-preserving, but
this restriction is not necessary. In
Section~\ref{sec:orientation-reversing}, we check that $t_{g+1}^\ell$ can
be isotopic to $h^n$ with $h$ orientation-reversing only when $\ell = 0$.
We also observe that roots can only be conjugate by orientation-preserving
homeomorphisms. Thus Theorem~\ref{thm:main} is a complete classification of
all roots of $t_{g+1}$ in the homeomorphism group, up to conjugacy.

Theorem~\ref{thm:main} gives some information on roots of powers of
$t_{g+1}$, that is, on fractional powers of $t_{g+1}$, as we discuss in
Section~\ref{sec:fractional}. For example, $t_2^2$ has a fourth root
although as we saw in Corollary~\ref{coro:maxroot}, $t_2$ does not have a
square root. Our methods can also be used to understand the roots of Dehn
twists about separating curves. Of course in this case, the roots will
depend on the genera of the complementary components. We expect to pursue
these ideas in future work.

\newpage

\section{The main theorem}
\label{sec:main}

For us, a Dehn twist means a left-handed Dehn twist, one for which the
image of an arc crossing $C$ turns to the left approaching $C$, as seen
from the outside of the oriented surface.

By a \textit{data set} we mean a tuple
$(n,g_0,(a,b);(c_1,n_1),\ldots,(c_m,n_m))$ where
$n$, $g_0$, $a$, $b$, the $c_i$ and the $n_i$ are integers satisfying
\begin{enumerate}
\item[(i)] $n>1$, $g_0\geq 0$, each $n_i>1$, and each $n_i$ divides $n$,
\item[(ii)] $\gcd(a,n) = \gcd(b,n)= 1$ and each $\gcd(c_i,n_i)=1$,
\item[(iii)] $a+b=ab\bmod n$, and
\item[(iv)] $a + b + \displaystyle\sum_{i=1}^m \dfrac{n}{n_i}\,c_i = 0\bmod n$.
\end{enumerate}
\noindent By condition (ii), $a$ and $b$ are units $\bmod\ n$, so condition
(iii) requires $n$ to be odd, and conditions (iii) and (iv) require $m\geq
1$. The number $n$ is called the \textit{degree} of the data set, and the
positive integer $g$ defined by
\[ g=g_0n + \frac{1}{2}\sum_{i=1}^m \frac{n}{n_i}(n_i-1) \]
is its \textit{genus.} Note that $g$ is independent
of the values of $a$, $b$, and the $c_i$, and no data set has
genus~$0$. Later we will check that $n\leq 2g+1$.

We consider two data sets to be the same if they differ by interchanging
$a$ and $b$, changing $a$ or $b\bmod n$, changing a $c_i\bmod n_i$,
or reordering the pairs $(c_1,n_1)\ldots\,$, $(c_m,n_m)$. With this
understanding, we have our main result.

\begin{theorem} For a given $n>1$ and $g\geq 0$, data sets of genus $g$ and
degree $n$ correspond to the conjugacy classes in $\Mod(G_{g+1})$ of the
roots of $t_{g+1}$ of degree~$n$.\par
\label{thm:main}
\end{theorem}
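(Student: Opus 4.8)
The plan is to set up a dictionary between roots of $t_{C}$ and finite cyclic group actions, using the classical fact (Nielsen's theorem for cyclic groups) that a finite-order mapping class is represented by a finite-order homeomorphism, and then to translate such an action into the combinatorial bookkeeping of a branched covering of orbifolds, which is precisely the notion of a data set. The curve $C$ itself is ``resolved'' by cutting and capping, so that a root of $t_{C}$ becomes a genuine periodic homeomorphism of a smaller surface.

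For the forward direction, suppose $h^{n}=t_{C}$ in $\Mod(G_{g+1})$. From $h\,t_{C}\,h^{-1}=t_{h(C)}$ and $h\,t_{C}\,h^{-1}=h^{n}=t_{C}$ one gets that $h(C)$ is isotopic to $C$, so $h$ may be isotoped to preserve $C$ and an annular neighborhood $N(C)$ setwise. I would first rule out the possibility that $h$ interchanges the two sides of $C$: then $h$ permutes the two components of $\partial N(C)$, producing a parity obstruction incompatible with $h^{n}=t_{C}$; this is the point at which $n$ must be odd. With the sides preserved, cut $G_{g+1}$ along $C$ and cap the two resulting circles with disks having marked centers $x_{1},x_{2}$, obtaining a closed surface $\Sigma$ of genus $g$ with two marked points and an induced mapping class $\bar h$ fixing $x_{1}$ and $x_{2}$; since a Dehn twist about a curve bounding a once-punctured disk is trivial, $\bar h^{\,n}$ is trivial in $\Mod(\Sigma,\{x_{1},x_{2}\})$, so $\bar h$ is periodic and may be realized by a homeomorphism $\rho$ of order $n$ fixing $x_{1}$ and $x_{2}$. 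Now read off the data set from the branched covering $\Sigma\to\Sigma/\langle\rho\rangle=\O$: take $g_{0}$ to be the genus of $\O$, let $a,b$ be the rotation numbers of $\rho$ at $x_{1},x_{2}$ and $(c_{i},n_{i})$ the local data at the remaining cone points. Conditions (i) and (ii) come from elementary covering theory, (iv) from the fact that the monodromy $\pi_{1}^{\mathrm{orb}}(\O)\to\Z/n$ kills the product of the standard generators, and the genus formula from Riemann--Hurwitz. Condition (iii) is the one encoding that $\rho$ comes from a root of $t_{C}$: reconstructing $h$ from $\rho$ means removing $\rho$-invariant disks around $x_{1},x_{2}$ and regluing the boundary circles through $N(C)$, and a local computation near $C$ identifies the isotopy class of $h^{n}$ on $N(C)$ with a power of $t_{C}$ determined by $a$ and $b$ mod $n$; requiring this power to be exactly the left-handed twist $t_{C}$ yields $a+b\equiv ab\pmod n$, equivalently $(a-1)(b-1)\equiv 1\pmod n$.

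Running this backwards, a data set gives an orbifold $\O$ together with a surjection $\pi_{1}^{\mathrm{orb}}(\O)\to\Z/n$ realizing the prescribed ramification (conditions (i), (ii), (iv) are exactly what is needed for such a homomorphism to exist), hence a $\Z/n$-cover $\Sigma\to\O$ with generating homeomorphism $\rho$, from which the cut-and-reglue construction produces a homeomorphism $h$ of $G_{g+1}$; the verification $h^{n}=t_{C}$ is again the local computation that produced (iii). For the conjugacy statement I would use that periodic homeomorphisms of a surface are conjugate in the mapping class group exactly when they are topologically conjugate, that topological conjugacy of the $\rho$'s is the same as isomorphism of the branched coverings over $\O$ respecting the distinguished pair $\{x_{1},x_{2}\}$, and that this isomorphism relation is precisely the equivalence of data sets stated before the theorem (interchanging $a,b$ corresponds to swapping $x_{1},x_{2}$; changing $a,b\bmod n$ and $c_{i}\bmod n_{i}$ to choices of local coordinates; reordering the pairs to the unorderedness of the other cone points). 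One also checks that the passage ``root $\leftrightarrow$ periodic map on the capped, cut surface'' is itself a bijection on conjugacy classes, which uses that $N(C)$ and the capping are canonical up to isotopy.

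I expect the principal obstacle to be the local computation near $C$ that pins down condition (iii): one must keep careful track of orientation conventions and of the left-handedness of $t_{C}$ to see that $a$ and $b$ must satisfy $a+b\equiv ab\pmod n$ and no other relation, and to confirm that the reconstruction leaves no residual twisting or periodic part, so that $h^{n}$ is exactly $t_{C}$ rather than a power of it or a power times a periodic map. Secondary points needing care are the reduction showing $h$ cannot interchange the sides of $C$, and the bookkeeping ensuring the correspondence is an honest bijection on conjugacy classes rather than merely a surjection in each direction.
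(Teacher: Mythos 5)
Your proposal follows the same overall strategy as the paper: isotope $h$ to preserve $C$ and a tubular neighborhood $N$, invoke Nielsen--Kerckhoff to make $h|_{\overline{G-N}}$ finite order, cap off the two boundary circles to get a $C_n$-action on a closed genus-$g$ surface $F$, and then read the data set off the orbifold quotient and the monodromy $\pi_1^{\mathrm{orb}}(\O)\to C_n$. The derivation of conditions (i), (ii), (iv) from orbifold covering theory, the genus from Riemann--Hurwitz, and condition (iii) from the rotation angles at the two capped-off points all match the paper, as does the normalization-plus-Nielsen argument for the conjugacy correspondence.

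The one place where your sketch glosses over something substantive is the step you call ``first rule out the possibility that $h$ interchanges the two sides of $C$,'' where you invoke ``a parity obstruction.'' Swapping the two components of $\partial N$ only shows that $n$ would have to be even; by itself this is not a contradiction. The paper's argument is a genuine computation: with $h$ of degree $2n$ swapping sides, one chooses equivariant coordinates on the two disks $D_P\cup D_Q$ and on $N$, tracks the rotation angles, and shows that any extension of $h$ over $N$ yields $h^{2n}=t_C^{2k+2jn}$ --- always an \emph{even} power of $t_C$, hence never $t_C$. (A separate, similar check is needed when $h^2$ is isotopic to the identity on the complement.) You also say ``this is the point at which $n$ must be odd,'' but oddness of $n$ actually comes out of condition (iii): $a+b=ab\bmod n$ with $a,b$ units forces $n$ odd. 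It does not come from the side-interchange analysis, and the side-interchange case must be excluded \emph{before} one can appeal to (iii) to conclude $n$ is odd. To your credit you flag the side-interchange step at the end as ``needing care,'' and the remaining details (choice of $a=k^{-1}$ versus the raw rotation number $k$, the cone-point-sliding normalization making $\rho(a_i)=\rho(b_i)=1$) are minor and could be filled in along the lines you indicate.
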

\begin{proof}
We will first prove that every conjugacy class of roots of degree $n$
yields a data set of degree $n$ and genus $g$.

Fix a nonseparating curve $C$ in an oriented surface $G$ of genus
$g+1$. Choose a closed tubular neighborhood $N$ of $C$, and put
$F_0=\overline{G-N}$. By isotopy we may assume that $t_C(C)=C$, $t_C(N)=N$,
and $t_C\vert_{F_0}=id_{F_0}$.

Suppose that $h$ is a root of $t_C$ of degree $n$. We have
$t_C=ht_Ch^{-1}=t_{h(C)}$, which implies that $h(C)$ is isotopic to
$C$. Changing $h$ by isotopy, we may assume that $h$ preserves $C$ and
takes $N$ to $N$. Put $h_0=h\vert_{F_0}$.

Since $h^n\simeq t_C$ and both preserve $C$, there is an isotopy from $h^n$
to $t_C$ preserving $C$ and hence one taking $N$ to $N$ at each time. That
is, $h_0^n$ is isotopic to $id_{F_0}$. By the Nielsen-Kerckhoff theorem,
$h_0$ is isotopic to a homeomorphism whose $n^{th}$ power is $id_{F_0}$.
(The Nielsen-Kerckhoff Theorem was proven in general by S. Kerckhoff
\cite{K1, K2}. The cyclic case we need here was given by J. Nielsen in
\cite{N}, although W. Fenchel \cite{Fenchel1, Fenchel2} gave the first
complete proof. See the introduction in H. Zieschang's monograph~\cite{Z}.)
So we may change $h$ by isotopy so that $h_0^n=id_{F_0}$.

We cannot have $h_0^r=id_{F_0}$ for any $r$ with $1<r<n$. For a minimal
such $r$ would have to divide $n$, and then $h^r$ would be isotopic either
to the identity or to some power $t_C^\ell$ of $t_C$, forcing
$t_C=h^n=t_C^{\ell n/r}$ with $\ell n/r$ either $0$ or greater than $1$. So
$h_0$ defines an effective action of the cyclic group $C_n$ of order $n$ on
$F_0$. Filling in the two boundary circles of $F_0$ with disks and
extending $h_0$ to a homeomorphism $t$ by coning, we obtain a $C_n$-action
on the closed orientable surface $F$ of genus~$g$, where $C_n=\langle
t\;\vert\;t^n=1\rangle$.

Later, we will show that $h$ cannot interchange the sides of $C$. For now,
assume that it does not. Under this assumption, $t$ fixes the center points
$P$ and $Q$ of the two disks of $\overline{F-F_0}$. The orientation of $G$
determines one for $F_0$ and hence for $F$, so we may speak of directed
angles of rotation about $P$ and $Q$ (and any other fixed points of
$t$). The rotation angle of $t$ at $P$ is $2\pi k/n$ for some $k$ with
$\gcd(k,n)=1$. As illustrated in Figure~\ref{fig:partial_twist}, the
rotation angle at $Q$ must be $2\pi(1-k)/n$, in order that $h^n$ be a
single Dehn twist.
\begin{figure}
\labellist
\small
\pinlabel $A$ [B] at 160 95
\pinlabel $P$ [B] at 73 80
\pinlabel $t(A)$ [B] at -10 142
\pinlabel $B$ [B] at 358 95
\pinlabel $t(B)$ [B] at 250 10
\pinlabel $Q$ [B] at 260 105
\pinlabel $A$ [B] at 555 175
\pinlabel $B$ [B] at 555 30
\endlabellist
\centering
\includegraphics[width=65 ex]{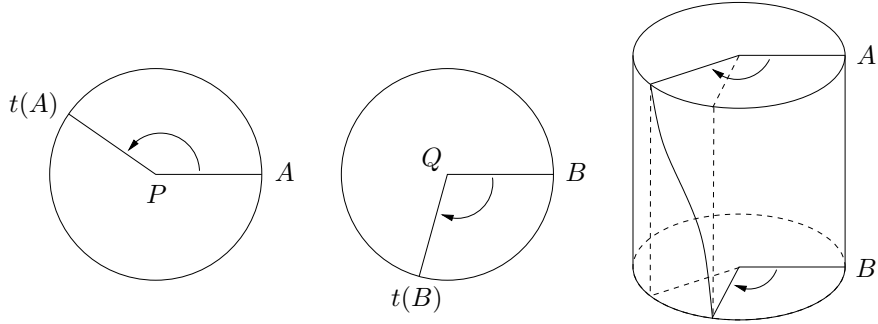}
\caption{The local effect of $t$ on disk neighborhoods of $P$ and $Q$ in
$F$, and the effect of $h$ on the neighborhood $N$ of $C$ in $G$. Only the
boundaries of the disk neighborhoods are contained in $G$, where they form
the boundary of $N$. The condition $a+b=ab\bmod n$ holds when
the angle at $P$ is $2\pi a^{-1}/n$ and the angle at $Q$ is
$2\pi(1-a^{-1})/n$.}
\label{fig:partial_twist}
\end{figure}

\begin{figure}
\labellist
\small
\pinlabel $p$ [B] at 272 68
\pinlabel $q$ [B] at 225 92
\pinlabel $x_0$ [B] at 152 14
\pinlabel $x_1$ [B] at 180 95
\pinlabel $x_2$ [B] at 126 95
\pinlabel $x_3$ [B] at 72 95
\pinlabel $\alpha$ [B] at 312 102
\pinlabel $\beta$ [B] at 258 138
\pinlabel $\gamma_1$ [B] at 182 152
\endlabellist
\centering
\includegraphics[width=75 ex]{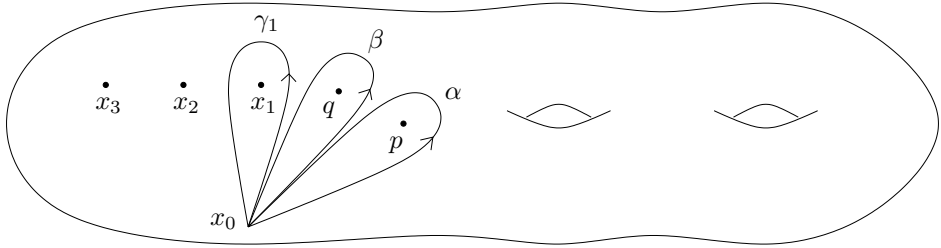}
\caption{A quotient orbifold $\O$, for the case $m=3$ and $g_0=2$.}
\label{fig:orbifold}
\end{figure}
Now, let $\O$ be the quotient orbifold for the action of $C_n$ on $F$, and
let $g_0$ be the genus of the underlying $2$-manifold $\vert \O\vert$.
Figure~\ref{fig:orbifold} shows $\O$, with cone points $p$ and $q$ of order
$n$ (the images of the points $P$ and $Q$ of $F$) and possibly other cone
points $x_1,\ldots\,$, $x_m$ of some orders $n_1\ldots\,$, $n_m$. The
figure also shows some of the generators $\alpha$, $\beta$, and $\gamma_1$
of $\pi_1^{orb}(\O)$. Along with similar generators $\gamma_i$ going around
the other $x_i$ and standard generators $a_j$ and $b_j$, $1\leq j\leq g_0$
in the ``surface part'' of $\O$, we have a presentation
\begin{gather*} 
\pi_1^{orb}(\O)=\langle \alpha, \beta, \gamma_1,\ldots, \gamma_m,
a_1,b_1,\ldots, a_{g_0}, b_{g_0}\;\vert\;\\
\alpha^n=\beta^n=\gamma_1^{n_1}=\cdots =\gamma_m^{n_m}=1,\;
\alpha \beta \gamma_1 \cdots \gamma_m=\prod_{j=1}^{g_0}[a_j,b_j]\;\rangle
\end{gather*}

\begin{figure}
\labellist
\small
\pinlabel $p$ [B] at 72 62
\pinlabel $\widetilde{x_0}$ [B] at 158 68
\pinlabel $\widetilde{\alpha}$ [B] at 150 90
\pinlabel $t(\widetilde{x_0})$ [B] at 47 148
\pinlabel $t^a(\widetilde{x_0})$ [B] at 155 110
\endlabellist
\centering
\includegraphics[width=25 ex]{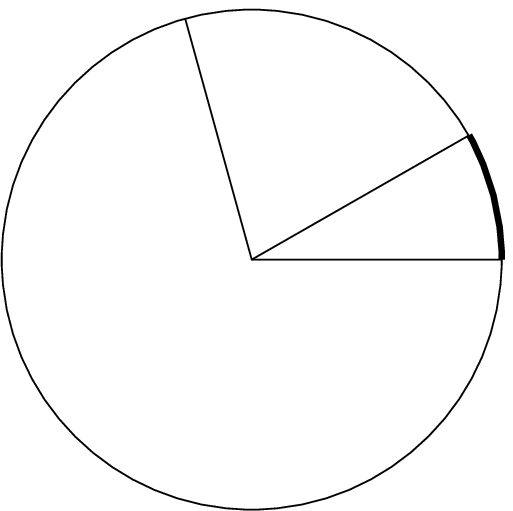}
\caption{The lift of $\alpha$ to $F$ starting at $\widetilde{x_0}$.}
\label{fig:lift}
\end{figure}

From orbifold covering space theory, the orbifold covering map $F\to \O$
corresponds to an exact sequence
\[ 1 \longrightarrow \pi_1(F) \longrightarrow \pi_1^{orb}(\O)
\stackrel{\rho}{\longrightarrow} C_n \longrightarrow 1\ .\] Here, $C_n$ is
the group of covering transformations, generated by $t$, and $\rho$ is
obtained by lifting path representatives of elements of
$\pi_1^{orb}(\O)$--- these do not pass through the cone points so the lifts
are uniquely determined. To find $\rho(\alpha)$, we note first that the
loop $\alpha$ lifts as shown in Figure~\ref{fig:lift}, so $\rho(\alpha)$
maps to $t^a$ where $t^a$ has rotation angle $2\pi/n$ about $P$. Since $t$
acts with rotation angle $2\pi k/n$, we have $ka= 1\bmod n$ so $k=a^{-1}
\bmod n$. Similarly at $Q$, the rotation angle of $t$ is $2\pi b^{-1}/n$.
Since $h^n=t_{g+1}$, the left-hand twisting angle along $N$ in
Figure~\ref{fig:partial_twist} is $2\pi/n$. This requires $2\pi
b^{-1}n-(-2\pi a^{-1}/n)=2\pi/n$, giving $b^{-1}+a^{-1}=1\bmod
n$. Multiplying by $ab$ produces condition (iii) of a data set.

For $1\leq i\leq m$, the preimage of $x_i$ consists of $n/n_i$ points
cyclically permuted by $t$. Each of the points has stabilizer generated by
$t^{n/n_i}$. The rotation angle of $t^{n/n_i}$ must be the same at all
points of the orbit, since its action at one point is conjugate by a power
of $t$ to its action at each other point. So the rotation angle at each
point is of the form $2\pi c_i'/n_i$, where $\gcd(c_i',n_i)$, and as
before, lifting $\gamma_i$ shows that $\rho(\gamma_i)= (t^{n/n_i})^{c_i}$
where $c_i= (c_i')^{-1}\bmod n_i$.

Finally, we have $\rho(\prod_{j=1}^{g_0}[a_j,b_j])=1$, since $C_n$ is
abelian, so 
\[1=\rho(\alpha\beta\gamma_1\cdots \gamma_m)= 
t^{a+b+(n/n_1)c_1+\cdots +(n/n_m)c_m}\ ,\]
giving condition (iv) of a data set.

The fact that the genus of the data set equals $g$ follows
from the multiplicativity of the orbifold Euler characteristic for
the orbifold covering $F\to \O$:
\[ (2-2g)/n = 2 - 2g_0 + 2\Big(\frac{1}{n}-1\Big) + \sum_{i=1}^m \Big(\frac{1}{n_i}
-1\Big)\ .\] 
Thus $h$ leads to a data set of degree $n$ and genus~$g$.

Suppose now that $h$ interchanges the sides of $C$. Its degree must be even,
and we will write it as $2n$. The points $P$ and $Q$ are now interchanged
by $t$, while $h^2$ is a root of $t_{g+1}$ of order $n$ that does not
interchange the sides. In particular, $n$ must be odd. We assume for now
that $n\geq 3$.

Let $D_P$ and $D_Q$ be the disks centered at $P$ and $Q$, for which
$D_P\cup D_Q=\overline{F-F_0}$. The actions of $t^2$ at $P$ and $Q$ are
conjugate, by $t$, so there exists an equivariant homeomorphism from
$D_Q\cup D_P$ to $D^2\times \{-1,1\}$, where the latter has the action
$t^2(x,-1)=(\exp(2\pi i k/n)\,x,-1)$ and $t^2(x,1)=(\exp(-2\pi i
k/n)\,x,1)$ (the minus sign is not necessary, but is natural for our
construction). We think of $P$ and $Q$ as corresponding to $\{0\}\times
\{1\}$ and $\{0\}\times \{-1\}$ respectively.

Since $2\pi k/n$ is twice $2\pi k/(2n)$ and twice $2\pi
(k+n)/(2n)$, we may further assume that the action of $t$ on $D^2\times
\{-1,1\}$ in these coordinates is either $t(x,-1)=(\exp(-2\pi i
k/(2n))\,\overline{x},1)$ and $t(x,1)=(\exp(2\pi i
k/(2n))\,\overline{x},-1)$, or $t(x,-1)=(\exp(-2\pi i
(k+n)/(2n))\,\overline{x},1)$ and $t(x,1)=(\exp(2\pi i
(k+n)/(2n))\,\overline{x},-1)$.

Figure~\ref{fig:noflips} illustrates the effect of $t$ on $\partial N$ for
the first action, in which $t(x,1)=(\exp(2\pi i
k/(2n))\,\overline{x},-1)$. The indicated angles are $2\pi k/(2n)$. If we
extend $h_0$ to $N$ by sending $(x,t)$ to $(\overline{x},1-t)$ followed by
a simple left-hand twist, as in Figure~\ref{fig:noflips}, then the twisting
angle is $2\pi k/n$, and consequently $h^{2n}=t_{g+1}^{2k}$. Other
extensions to $N$ will differ from this by full twists, giving
$h^{2n}=t^{2k+2jn}$ for some integer~$j$. In any case, $h^{2n}$ cannot
equal $t_{g+1}$. For the second action, in which $t(x,1)=(\exp(2\pi i
(k+n)/(2n))\,\overline{x},-1)$, the amount of twisting on $N$ is still
$2\pi k/n$ plus some number of full twists, so again $h^{2n}=t^{2k+2jn}$.
\par
\begin{figure}
\labellist
\small
\pinlabel $B$ [B] at 165 32
\pinlabel $A$ [B] at 165 177
\pinlabel $t(B)$ [B] at 78 129
\pinlabel $t(A)$ [B] at 77 79
\pinlabel $t^2(B)$ [B] at -17 55
\pinlabel $t^2(A)$ [B] at -17 155
\endlabellist
\centering
\includegraphics[width=23 ex]{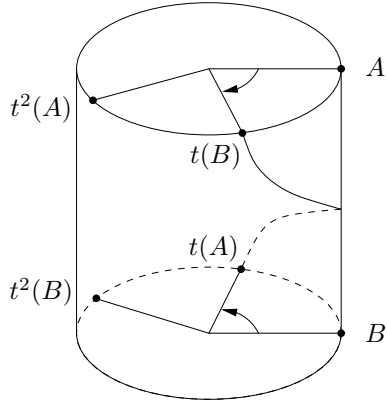}
\caption{An extension of $t$ to $N$ in case $h$ interchanges the sides of
$C$. The amount of left-hand twisting on $N$ is $2\pi k/n$, so
$h^{2n}=t_{g+1}^{2k}$.}
\label{fig:noflips}
\end{figure}

Finally, suppose that $n=1$. Then in the previous construction, $t^2$ is
the identity on $D^2\times\{-1,1\}$, and $t$ is either
$t(x,-1)=(\overline{x},1)$ and $t(x,1)=(\overline{x},-1)$, or
$t(x,-1)=(-\,\overline{x},1)$ and $t(x,1)=(-\,\overline{x},-1)$. In either
case, any extension of $h_0$ to $N$ has some number of full twists, so
$h^2$ is some even power of~$t_C$.

At this point, we have shown how every root of $t_{g+1}$ produces a data
set. If the original roots are conjugate in $\Mod(G)$, then their
restrictions to $F_0$ are conjugate and isotopic to conjugate
homeomorphisms of order $n$, and their extensions to $F$ are conjugate by a
homeomorphism preserving $\{P,Q\}$. Therefore their orbifold quotients $\O$
and $\O'$ are homeomorphic by an orientation-preserving orbifold
homeomorphism preserving taking the distinguished cone points $\{p,q\}$ to
the distinguished cone points $\{p',q'\}$ of $\O'$, and compatible with the
representations of the orbifold fundamental groups to $C_n$. It follows
that our procedure produces equivalent data sets.

Given a data set, we can reverse the argument to produce the root $h$. We
construct the corresponding orbifold $\O$ and representation $\rho\colon
\pi_1^{orb}(\O)\to C_n$. Any finite subgroup of $\pi_1^{orb}(\O)$ is
conjugate to a subgroup of one of the cyclic subgroups generated by
$\alpha$, $\beta$, or a $\gamma_i$, so condition (ii) ensures that the
kernel of $\rho$ is torsionfree. Therefore the orbifold covering $F\to \O$
corresponding to the kernel is a manifold, and calculation of its Euler
characteristic shows that $F$ has genus $g$. Removing disks around the
fixed points $P$ and $Q$ corresponding to the cone points $p$ and $q$
produces the surface $F_0$, and attaching an annulus $N$ produces the
surface $G$ of genus $g+1$. Condition (iii) ensures that the rotation
angles work correctly to allow an extension of $t\vert_{F_0}$ to an $h$
with $h^n$ a single Dehn twist. 

It remains to show that the resulting root of $t_{g+1}$ is determined up to
conjugacy. Our data sets encode the fixed-point data of the periodic
transformation $t$, and it was proven by J. Nielsen~\cite{N1} that this
data determines $t$ up to conjugacy. We require in addition that the
conjugating homeomophism preserve $\{P,Q\}$.

Suppose that $h$ and $h'$ are roots obtained by applying our procedure to a
data set $(n,g_0,(a,b); (c_1,n_1),\ldots,(c_m,n_m))$. That is, we use the
data set to define orbifolds $\O$ and $\O'$ and homomorphisms $\rho\colon
\pi_1^{orb}(\O)\to C_n$ and $\rho'\colon \pi_1^{orb}(\O')\to C_n$, then
take the corresponding covers $F$ and $F'$ and so on. Each of $\O$ and
$\O'$ has genus $g_0$ and $m+2$ cone points of corresponding orders,
including the two distinguished order-$n$ cone points, which give elements
$\alpha$ and $\beta$ and $\pi_1^{orb}(\O)$ and $\alpha'$ and $\beta'$ and
$\pi_1^{orb}(\O')$. We have $\rho(\alpha)=\rho'(\alpha')=t^a$, where the
rotation angles of $t$ and $t'$ at $P$ are $2\pi a^{-1}/n$, and similarly
for the other generators coming from cone points.

We claim that the generators $a_i$ and $b_i$ of $\pi_1^{orb}(\O)$ may
be selected so that $\rho(a_i)=\rho(b_i)=1$ for all $i$. Suppose this is
not initially the case. There is an orbifold homeomorphism of $\O$ whose
effect on the abelianization of $\pi_1^{orb}(\O)$ is to send
$\overline{a_1}$ to $\overline{a_1}\overline{\alpha}$ and to fix the other
generators; it is the end map of an isotopy that slides the cone point $p$
around a loop that represents $\overline{b_1}$. Since $\rho(\alpha)$ is a
generator of $C_n$, we may repeat this homeomorphism some number of times
until for the new $a_1$, $\rho(\overline{a_1})=1$. Repeating this process
on the other $a_i$ and $b_i$, we obtain a new set of generators that verify
the claim.

Performing a similar process, we may assume that
$\rho'(a_i')=\rho'(b_i')=1$ for all~$i$. Now, we take an
orientation-preserving orbifold homeomorphism $k\colon \O\to\O'$ such that
$k_\#(\alpha)=\alpha'$ and so on. It satisfies $\rho'\circ k_\#=\rho$, so
$k$ lifts to a homeomorphism $K\colon F\to F'$ such that $KtK^{-1}=t'$. If
we select $k$ with a bit of care, $K$ carries $F_0$ to $F_0'$, and we can
extend $K|_{F_0}$ to a homeomorphism of $G$ conjugating $h$ to~$h'$.
\end{proof}

Theorem~\ref{thm:main} tells us that $t_{g+1}$ always has a cube root when
$g\geq 1$, corresponding to the data sets $(3,0,(2,2);(c_1,3),\ldots,
(c_g,3))$ with the $c_i$ selected to achieve condition~(iv).
Also, if $t_{g+1}$ has a root of degree $n$, then replacing $g_0$ by
$g_0+1$ in a corresponding data set produces a root of degree $n$ for
$t_{g+n+1}$.

Of more interest is the following:
\begin{corollary}
Suppose that $t_{g+1}$ has a root of degree $n$. Then
\begin{enumerate}
\item[(a)] $n$ is odd.
\item[(b)] $n\leq 2g+1$.
\end{enumerate}
\label{coro:maxroot}
\end{corollary}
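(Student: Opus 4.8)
The plan is to read both conclusions off a data set. By Theorem~\ref{thm:main}, a root of $t_{g+1}$ of degree $n$ yields a data set $(n,g_0,(a,b);(c_1,n_1),\ldots,(c_m,n_m))$ of degree $n$ and genus $g$, so it suffices to show that the defining conditions (i)--(iv), together with the genus formula, force $n$ to be odd and $n\leq 2g+1$.

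For part (a), I would multiply condition (iii), $a+b\equiv ab\bmod n$, by $a^{-1}b^{-1}$; these inverses exist by condition (ii), and one obtains $a^{-1}+b^{-1}\equiv 1\bmod n$. If $n$ were even, then every unit mod $n$ is odd, so the left-hand side would be even; but any integer congruent to $1$ modulo the even number $n$ is odd. Hence $n$ is odd. (This is the observation already recorded just after the definition of a data set.)

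For part (b), rewrite the genus as $g=g_0n+\frac12\sum_{i=1}^m\bigl(n-\tfrac{n}{n_i}\bigr)$, so it is enough to prove $\sum_{i=1}^m\bigl(n-\tfrac{n}{n_i}\bigr)\geq n-1$, since this gives $g\geq (n-1)/2$ and hence $n\leq 2g+1$. Subtracting condition (iii) from condition (iv) gives $\sum_{i=1}^m\tfrac{n}{n_i}c_i\equiv -ab\bmod n$, and $-ab$ is a unit by (ii), in particular nonzero mod $n$; so $m\geq1$. If $m\geq2$, then each summand satisfies $n-\tfrac{n}{n_i}\geq n/2$ because $n_i\geq2$, so the sum is at least $n>n-1$. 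If $m=1$, then $\tfrac{n}{n_1}c_1$ is a unit mod $n$, so $\gcd(\tfrac{n}{n_1}c_1,\,n)=1$; since $\tfrac{n}{n_1}$ divides both $\tfrac{n}{n_1}c_1$ and $n$, it follows that $\tfrac{n}{n_1}=1$, i.e.\ $n_1=n$, and the single summand equals $n-1$. Either way the inequality holds, which completes the proof.

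The argument is essentially bookkeeping once Theorem~\ref{thm:main} is available; the one point that requires a moment's care is the tight case $m=1$ of part~(b), where one must notice that conditions (ii)--(iv) force the lone cone-point order $n_1$ to equal $n$, so that the single term contributes exactly $n-1$ rather than less. (One could alternatively observe that $n$ odd forces every $n_i\geq3$, making the $m\geq2$ estimate even more generous, but this is not needed.)
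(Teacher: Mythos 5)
Your proof is correct and uses essentially the same ingredients as the paper's: both read the bound off the genus formula and handle the critical $m=1$ case by observing that conditions (ii)--(iv) force $n_1=n$ (equivalently, the paper shows $n_1<n$ leads to $a+b\equiv 0\bmod d$, contradicting (iii)). The only organizational difference is that you prove $g\geq(n-1)/2$ directly, whereas the paper argues by contradiction from $n>2g+1$, first deducing $g_0=0$ and $m=1$ from the inequality; your arrangement is arguably a bit cleaner but the substance is identical.
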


\begin{proof}
Part (a) is simply the fact that data sets must have odd degree. For (b),
suppose for contradiction that $n>2g+1$. From the formula for $g$, we have
$1>(2g+1)/n = 1/n+2g_0+\sum_{i=1}^m (1-1/n_i)$ so $g_0=0$, $m=1$, and
$n_1<n$. Putting $d=n/n_1$, condition (iv) gives $a+b= 0\bmod d$,
contradicting condition (iii) since $1<d$ and $d$ divides $n$.
\end{proof}

It may be of interest to note that the maximum degree of a root is half of
the maximum order $4g+2$ of a periodic homeomorphism of $F$, found
by A. Wiman~\cite{Wiman} and W. Harvey~\cite{Harvey}.

\section{The Margalit-Schleimer roots}
\label{sec:Margalit-Schleimer}

Here we will describe the examples of Margalit and Schleimer from our
viewpoint. They construct the surface $F$ by identifying opposite faces of
a $(4g+2)$-gon. It center point is $X_1$, and the two points that come from
identifying vertices are $P$ and $Q$. Pictures centered at $X_1$, $P$, and
$Q$ are shown in Figure~\ref{fig:MS} for the case of $g=2$; in general
$e_4$ becomes $e_{2g}$, and so on. Let $f$ be the homeomorphism of $F$
obtained by rotating through a (counterclockwise) angle of $2\pi/(2g+1)$ at
$P$ and $Q$. It carries $e_0$ to $e_1$, so it rotates through an angle of
$2\pi g/(2g+1)$ at $X_1$. Let $t$ be $f^{-g}$, which rotates through $2\pi
(g+1)/(2g+1)$ at $P$ and $Q$ and through $-2\pi g^2/(2g+1)$ at
$X_1$. Modulo $2g+1$, $-g^2$ is $g/2$ if $g$ is even and $-(g+1)/2$ if $g$
is odd, so $t$ is approximately a quarter turn at $X_1$, counterclockwise
if $g$ is even and clockwise if not. The examples are then obtained by the
construction in Theorem~\ref{thm:main}.
\begin{figure}
\labellist
\small
\pinlabel $X_1$ [B] at 240 120
\pinlabel $P$ [B] at 750 120
\pinlabel $Q$ [B] at 1280 120
\pinlabel $Q$ [B] at 140 415
\pinlabel $P$ [B] at 280 415
\pinlabel $X_1$ [B] at 660 415
\pinlabel $Q$ [B] at 800 415
\pinlabel $X_1$ [B] at 1180 415
\pinlabel $P$ [B] at 1320 415
\pinlabel $P$ [B] at 15 315
\pinlabel $Q$ [B] at 410 315
\pinlabel $Q$ [B] at 535 315
\pinlabel $X_1$ [B] at 940 315
\pinlabel $P$ [B] at 1065 315
\pinlabel $X_1$ [B] at 1465 315
\pinlabel $Q$ [B] at -28 190
\pinlabel $P$ [B] at 440 190
\pinlabel $X_1$ [B] at 495 190
\pinlabel $Q$ [B] at 965 190
\pinlabel $X_1$ [B] at 1015 190
\pinlabel $P$ [B] at 1490 190
\pinlabel $P$ [B] at 10 60
\pinlabel $Q$ [B] at 405 60
\pinlabel $Q$ [B] at 533 60
\pinlabel $X_1$ [B] at 935 60
\pinlabel $P$ [B] at 1060 60
\pinlabel $X_1$ [B] at 1460 60
\pinlabel $Q$ [B] at 140 -40
\pinlabel $P$ [B] at 280 -40
\pinlabel $X_1$ [B] at 660 -40
\pinlabel $Q$ [B] at 800 -40
\pinlabel $X_1$ [B] at 1180 -40
\pinlabel $P$ [B] at 1320 -40
\pinlabel $e_0$ [B] at 215 415
\pinlabel $e_4$ [B] at 65 370
\pinlabel $e_1$ [B] at 350 370
\pinlabel $e_3$ [B] at -10 260
\pinlabel $e_2$ [B] at 430 260
\pinlabel $e_2$ [B] at -10 140
\pinlabel $e_3$ [B] at 430 140
\pinlabel $e_1$ [B] at 65 20
\pinlabel $e_4$ [B] at 350 20
\pinlabel $e_0$ [B] at 215 -30
\pinlabel $e_0$ [B] at 880 215
\pinlabel $e_0$ [B] at 1390 215
\pinlabel $e_1$ [B] at 752 340
\pinlabel $e_1$ [B] at 1277 340
\pinlabel $e_2$ [B] at 600 260
\pinlabel $e_2$ [B] at 1115 260
\pinlabel $e_3$ [B] at 628 85
\pinlabel $e_3$ [B] at 1150 85
\pinlabel $e_4$ [B] at 812 55
\pinlabel $e_4$ [B] at 1335 55
\endlabellist
\centering
\includegraphics[width=\textwidth]{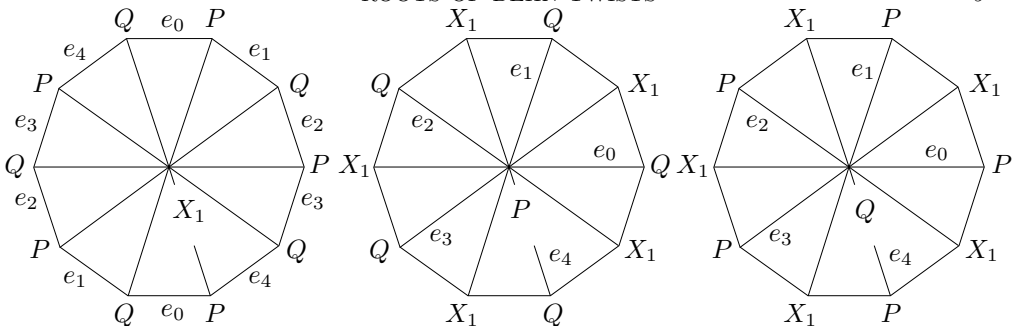}
\caption{The configurations of the edges $e_i$ around $P$ and $Q$, as seen
from $X_1$, $P$, and $Q$ in the Margalit-Schleimer example for $g=2$.}
\label{fig:MS}
\end{figure}

The inverse of $g+1\bmod 2g+1$ is $2$, so $a=b=2$, while the inverse of
$-g^2$ is $c_1=-4$. So the data set resulting from the Margalit-Schleimer
construction is $(2g+1,0,(2,2);(-4,2g+1))$.

We call a root of $t_{g+1}$ a \textit{Margalit-Schleimer} root if it has
degree $2g+1$. Using Theorem~\ref{thm:main}, is is easy to find all the
Margalit-Schleimer roots. We need only find the $x\bmod n$ such that $x$
and $1-x$ are both relatively prime to $n$, then put $a=x^{-1}$ and
$b=(1-x)^{-1}\bmod n$, and $c_1=-a-b\bmod n$. A GAP function to list such
roots is provided in the software at~\cite{software}. For example, we find
that $t_{11}$ has three Margalit-Schleimer roots, $( 21, 0, ( 2, 2 );( 17,
21 ))$, $( 21, 0, ( 5, 17 );( 20, 21 ) )$, and $( 21, 0, ( 11, 20 ); ( 11,
21 ) )$, and $t_{1,001}$ has~$284$.

\section{Genus sets}
\label{sec:genus_set}

The \textit{genus set} of $n$, $g(n)$, is the set of $g$ such that
$t_{g+1}$ has a root of degree $n$. Corollary~\ref{coro:maxroot} tells us
that $g(n)$ is empty for even $n$. For odd $n$, we can gain considerable
information about the genus set. For the rest of this section, $n$ will be
assumed odd, and $n_0$ will denote $(n-1)/2$.

A data set with all $n_i=n$ is called a \textit{primary} data set, and the
corresponding root of $t_{g+1}$ is called a \textit{primary root.}
Primary data sets exist for all $m\geq 1$, since we may take $a=b=2$, and
the $c_i$ selected from ${-4,-2,1,-1}$ so that $a+b+\sum c_i= 0\bmod n$.

We now examine the genera of primary data sets. A quick example will make
it much easier to follow the notation. For $n=9$, so that $n_0=4$, we
position the genera according to their values $\bmod\ n_0$:
\begin{small}\[
\begin{tabular}{cccc}
(0) & (1) & (2) & (3) \\
4 & (5) & (6) & (7) \\
8 & (9) & (10) & (11) \\
12 & 13 & (14) & (15)  \\
16 & 17 & (18) & (19)\\
20 & 21 & 22 & (23) \\
24 & 25 & 26 & (27) \\
28 & 29 & 30 & 31 \\
\end{tabular}
\]
\end{small}\noindent The genus of a primary data set
$(n,g_0,(a,b);(c_1,n),\ldots,(c_m,n))$ is $ng_0+mn_0$.  For $g_0=0$, we
obtain the values $mn_0$ for $m\geq 1$, which for $n=9$ are the values in
the first column other than $0$. For $g_0=1$, $n+mn_0$ is always $1\bmod
n_0$, and we obtain all values greater than $n$. Similarly, $g_0=2$
gives the values in the third column greater than $2n$, and $g_0=3$ gives
those in the last column beyond $3n$. Higher values of $g_0$ give no new
values for $g$. So the primary data sets for $n=9$ give all values of $g$
except the $16$ values indicated in the table.

In general, the genera obtained from data sets of degree $n$ having $0\leq
g_0 <n_0$ are $g_0\bmod n_0$, and are exactly those with $g>g_0n$. No
new genera are obtained when $g_0\geq n_0$. So the genera not obtained
are those in the ``triangular'' set $T(n)$ defined by
\begin{gather*}T(n) = \cup_{0\leq g_0<n_0}T_{g_0}(n),\text{\ where}\\
T_{g_0}(n) =\{ g_0+mn_0\;\vert\; 0\leq m\leq 2g_0\}\ .
\end{gather*}
\noindent Since $T_{g_0}(n)$ has $2g_0+1$ elements, $T(n)$ has $n_0^2$
elements. The maximum element in $T(n)$ is the maximum element in
$T_{n_0-1}(n)$, which is $(n_0-1)n=n(n-3)/2$.

Since the primary data sets produce roots for every genus other than those
in $T(n)$, we have
\begin{corollary} For $n$ odd, $g(n)$ contains all $g\geq 0$ that are
not in $T(n)$. Consequently, $t_{g+1}$ has a root of degree $n$ whenever
$g+1> (n-2)(n-1)/2$.\par
\label{coro:almostall}
\end{corollary}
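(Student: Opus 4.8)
The plan is to use Theorem~\ref{thm:main} to convert the statement into a combinatorial assertion about data sets, and then to hit every genus outside $T(n)$ with an explicit primary data set. Precisely: by Theorem~\ref{thm:main}, $g\in g(n)$ if and only if there is a data set of degree $n$ and genus $g$, so it suffices to show that for each $g\geq 0$ with $g\notin T(n)$ there is a data set of degree $n$ and genus $g$, and for every such $g$ I will exhibit one of the primary form $(n,g_0,(2,2);(c_1,n),\ldots,(c_m,n))$.

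First I would pin down which genera primary data sets realize. Since each $n_i=n$, the genus formula collapses to $g=g_0 n+mn_0$ with $n_0=(n-1)/2$. For $a=b=2$ and all $n_i=n$, conditions (i)–(iii) of a data set hold automatically once $n$ is odd (in particular $-4,-2,1,-1$ are units mod~$n$), and condition (iv) reduces to $4+\sum_{i=1}^m c_i\equiv 0\bmod n$, which, as already observed, can be arranged for every $m\geq 1$ by taking the $c_i$ from $\{-4,-2,1,-1\}$ (use a single $-4$, or two $-2$'s when $m$ is even, and pad with canceling pairs $1,-1$). Hence $g(n)\supseteq\{\,g_0 n+mn_0\;:\;g_0\geq 0,\ m\geq 1\,\}$. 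Next I would check this set contains $\Z_{\geq 0}\setminus T(n)$: writing $n=2n_0+1$ gives $g_0 n+mn_0=g_0+(2g_0+m)n_0$, so for $g\geq 0$ with remainder $g_0=g\bmod n_0$ (note $0\leq g_0<n_0$ since $n\geq 3$) and quotient $q=(g-g_0)/n_0$, the value $g$ is realized by such a primary data set, with $m=q-2g_0$, exactly when $q\geq 2g_0+1$, i.e. exactly when $g\notin T_{g_0}(n)=\{g_0+m'n_0:0\leq m'\leq 2g_0\}$. Because the pieces $T_{g_0'}(n)$ for $0\leq g_0'<n_0$ sit in distinct residue classes mod~$n_0$, the condition $g\notin T(n)$ is equivalent to $g\notin T_{g_0}(n)$ for this one $g_0$; and primary data sets with $g_0'\geq n_0$ only produce genera exceeding $\max T(n)$, so they are irrelevant to this inclusion. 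That proves the first sentence of the corollary.

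For the ``consequently'' clause I would just note that every element of $T_{g_0}(n)$ is at most $g_0+2g_0 n_0=g_0 n$, so $\max T(n)=(n_0-1)n=n(n-3)/2$; since $n(n-3)/2=(n-2)(n-1)/2-1$, the hypothesis $g+1>(n-2)(n-1)/2$ forces $g>n(n-3)/2\geq\max T(n)$, hence $g\notin T(n)$ and a root of degree $n$ exists. I do not expect a real obstacle: the number theory is elementary, and the way the truncated progressions $g_0+n_0\Z$ tile $\Z_{\geq 0}\setminus T(n)$ has effectively been worked out already in the $n=9$ table; the only point needing a line of care is confirming that values $g_0\geq n_0$ add no genera lying inside $T(n)$.
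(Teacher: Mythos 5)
Your proposal is correct and follows the paper's argument essentially verbatim: construct primary data sets $(n,g_0,(2,2);(c_1,n),\ldots,(c_m,n))$ with the $c_i$ drawn from $\{-4,-2,1,-1\}$, observe that their genera $g_0n+mn_0$ sweep out exactly $\Z_{\geq0}\setminus T(n)$ because the $T_{g_0}(n)$ occupy distinct residue classes mod $n_0$, and then bound $\max T(n)=n(n-3)/2=(n-2)(n-1)/2-1$. You have merely written out explicitly the residue-class bookkeeping that the paper conveys through the $n=9$ table and a short verbal summary.
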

When $n$ is prime, all data sets are primary. So we have
\begin{corollary} For $n$ prime, $g(n)$ equals the set of $g$
not in $T(n)$. In particular, $t_{(n-2)(n-1)/2}$ does not have a root of
degree $n$.
\label{coro:primecase}
\end{corollary}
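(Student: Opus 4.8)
The plan is to combine Corollary~\ref{coro:almostall} with the special feature of prime degree, namely that it forces every data set to be primary. Corollary~\ref{coro:almostall} already supplies one inclusion: for odd $n$, every $g\notin T(n)$ lies in $g(n)$. So it remains only to prove the reverse inclusion, that when $n$ is prime no $g\in T(n)$ lies in $g(n)$. First I would observe that if $n$ is prime then in any data set of degree $n$ each $n_i$ is a divisor of $n$ greater than $1$, hence $n_i=n$; thus every data set of degree $n$ is primary.

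Next I would invoke the genus computation for primary data sets carried out just before Corollary~\ref{coro:almostall}: the genus of a primary data set $(n,g_0,(a,b);(c_1,n),\ldots,(c_m,n))$ is $ng_0+mn_0$, and as $g_0$ runs over $0\le g_0<n_0$ and $m$ over $m\ge 1$ (higher values of $g_0$ producing nothing new) these numbers are exactly the nonnegative integers not in $T(n)$. Since, by the previous paragraph, primary data sets are \emph{all} the data sets of degree $n$ when $n$ is prime, there is no data set of degree $n$ whose genus lies in $T(n)$. By Theorem~\ref{thm:main}, $g\in g(n)$ precisely when a data set of genus $g$ and degree $n$ exists, so $g(n)$ is exactly the complement of $T(n)$, which gives the reverse inclusion.

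For the final assertion I would note that, as recorded in the description of $T(n)$, its largest element is the largest element of $T_{n_0-1}(n)$, namely $(n_0-1)n=n(n-3)/2$. Since $n(n-3)/2+1=(n-1)(n-2)/2$, the choice $g=n(n-3)/2$ gives $g\in T(n)$ and $g+1=(n-2)(n-1)/2$, so $t_{(n-2)(n-1)/2}$ has no root of degree $n$.

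There is no substantive obstacle here; once Corollary~\ref{coro:almostall} and the preceding analysis of primary genera are in hand, the argument is pure bookkeeping. The only points that demand care are already dealt with above the corollary: verifying that the genera realized by primary data sets are \emph{exactly} the complement of $T(n)$ (so that "not primary-realizable $\Rightarrow$ lies in $T(n)$" holds, not merely its converse) and that $\max T(n)=n(n-3)/2$; getting the off-by-one right in $(n-2)(n-1)/2=n(n-3)/2+1$ is the one place worth double-checking.
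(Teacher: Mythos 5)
Your proof is correct and follows the same route as the paper: observe that primality of $n$ forces every $n_i$ to equal $n$ (so every data set of degree $n$ is primary), then invoke the already-established fact that primary data sets realize exactly the genera outside $T(n)$, and finally read off the particular genus $g=n(n-3)/2$ with $g+1=(n-2)(n-1)/2$. The paper states this in a single sentence; you have simply written out the bookkeeping explicitly, including the correct off-by-one check.
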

\noindent 
For example, $t_{g+1}$ has a cube root for all $g+1\geq 2$, and a fifth
root exactly when $g+1$ is not $1$, $2$, $4$, or $6$. For $n$ that are not
prime, determination of $g(n)$ is more complicated, as elements in $T(n)$
often arise from non-primary data sets. For example, $7\notin T(9)$, but a
ninth root for $t_8$ arises from the data set $(9,0,(2,2);(2,9),(1,3))$,
for which condition~(iv) is satisfied since $a+b+c_1+(9/3)c_2= 0\bmod 9$.

We note that $T_n$ contains about half of the values with $g\leq
n(n-3)/2$. Therefore in Figure~\ref{fig:rootset}, the pairs $(g,n)$
corresponding to primary roots would be about half of the pairs with $n$
odd in the region above $g=(n-1)(n-2)/2$.

\section{The root set and $(d,e)$-roots}
\label{sec:de}

The \textit{root set} $R(g)$ is the set of $n$ such that $t_{g+1}$ has a
root of degree $n$ (although \textit{degree set} would be a more accurate
name). Corollary~\ref{coro:primecase} allows us to effectively compute the
primes in $R(g)$. From Corollary~\ref{coro:almostall}, $R(g)$ contains $n$
whenever $(n-2)(n-1)/2\leq g$. In Theorem~\ref{thm:largeroots}, we will
determine all $n$ in $R(g)$ that satisfy $n\geq g$. First, we must
introduce $(d,e)$-roots. 

Let $d$ and $e$ be odd integers with $d,e\geq 3$. A root corresponding to a
data set having $g_0=0$, $m=2$, $n_1=d$ and $n_2=e$ is called a
\textit{$(d,e)$-root.} The next lemma requires an elementary
number-theoretic fact for which we are unable to find a reference.
To avoid interruption of the argument here, we will prove it later
as Lemma~\ref{lem:relatively_prime}.

\begin{lemma} For any odd integers $d,e\geq 3$, there exist $(d,e)$-roots. 
Such roots satisfy the following:
\begin{enumerate}
\item[(a)] $n=\dfrac{de}{\gcd(d,e)}$, i.~e.~$n=\lcm(d,e)$.
\item[(b)] $g=n-\dfrac{d+e}{2\gcd(d,e)}=n\Big(1-\dfrac{1}{2d}-\dfrac{1}{2e}\Big)$.
\item[(c)] $g+1\leq n< 6(g+2)/5$.
\item[(d)] $n=g+1$ exactly when $d=e=n$.
\end{enumerate}
\label{lem:DEroots}
\end{lemma}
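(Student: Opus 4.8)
The plan is to prove (a) first, since (b), (c), and (d) all follow from it by direct computation. Fix a $(d,e)$-root, so a data set $(n,0,(a,b);(c_1,d),(c_2,e))$ with $d\mid n$ and $e\mid n$. Write $f=\gcd(d,e)$, $d=fd'$, $e=fe'$ with $\gcd(d',e')=1$; then $\lcm(d,e)=fd'e'$ divides $n$, and the standard identity $\gcd(n/d,n/e)=n/\lcm(d,e)$ shows that $k:=n/\lcm(d,e)$ divides both $n/d$ and $n/e$. Reducing condition~(iv) modulo $k$ gives $a+b\equiv 0\pmod k$, while reducing condition~(iii) modulo $k$ gives $a+b\equiv ab\pmod k$; hence $k\mid ab$. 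Since $\gcd(a,n)=\gcd(b,n)=1$ and $k\mid n$, we have $\gcd(ab,k)=1$, and together with $k\mid ab$ this forces $k=1$. Thus $n=\lcm(d,e)$, which is (a).

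Granting (a), parts (b) and (d) are immediate from the genus formula. With $g_0=0$ and cone orders $d,e$ it reads $g=\tfrac12\big(\tfrac nd(d-1)+\tfrac ne(e-1)\big)=n-\tfrac{n}{2d}-\tfrac{n}{2e}$; since $n/d=e/f$ and $n/e=d/f$, this is both $n-(d+e)/(2f)$ and $n(1-\tfrac1{2d}-\tfrac1{2e})$, giving (b). For (d), $n=g+1$ is equivalent to $\tfrac nd+\tfrac ne=2$, and since each of $n/d$, $n/e$ is a positive integer at least $1$, this holds exactly when $n/d=n/e=1$, i.e.\ $d=e=n$ (and conversely $d=e=n$ forces $n=\lcm(d,e)=d$).

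For (c), the lower bound $g+1\le n$ amounts to $\tfrac nd+\tfrac ne\ge 2$, which is automatic; since then $g<n$ and $g$ is an integer, $g+1\le n$. The upper bound $n<6(g+2)/5$ rearranges, using the formula for $g$ together with $n/d=e'$ and $n/e=d'$, to the inequality $3d'+3e'-fd'e'<12$. Here the key observation is that $d'$ and $e'$, being divisors of the odd integers $d$ and $e$, are themselves odd. If $d',e'\ge 3$ then $3d'+3e'-fd'e'\le 3d'+3e'-d'e'=9-(d'-3)(e'-3)\le 9<12$; and if one of them, say $d'$, equals $1$, then $f=d\ge 3$ and $3+e'(3-f)\le 3<12$. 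So $n<6(g+2)/5$ in every case. (One may record the extreme case $d=e=3$, which gives $(n,g)=(3,2)$, lying on the line $n=g+1$.)

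It remains to establish \emph{existence} of a $(d,e)$-root for every odd $d,e\ge 3$, i.e.\ to exhibit $a,b,c_1,c_2$ satisfying (ii)--(iv) with $n=\lcm(d,e)$. For (iii) it suffices, as noted after the definition of a data set, to choose a unit $x$ modulo $n$ with $1-x$ also a unit and set $a=x^{-1}$, $b=(1-x)^{-1}$; such an $x$ exists by the Chinese Remainder Theorem, since every prime $p\mid n$ is odd and so leaves $p-2\ge 1$ admissible residues modulo $p$. With $a,b$ fixed, condition~(iv) becomes $e'c_1+d'c_2\equiv -(a+b)\pmod{fd'e'}$, to be solved with $\gcd(c_1,d)=\gcd(c_2,e)=1$. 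This last step is the one genuine obstacle: unlike the primary case, the two cone orders differ, so one cannot simply read off admissible $c_i$ from a short list. It is exactly the elementary number-theoretic statement we isolate and prove below as Lemma~\ref{lem:relatively_prime}; granting it, existence follows and the proof of Lemma~\ref{lem:DEroots} is complete.
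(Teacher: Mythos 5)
Your proof is correct and follows the same overall strategy as the paper (invoke Lemma~\ref{lem:relatively_prime} for existence, then read off (b)--(d) from the genus formula), but you supply one step the paper glosses over. The paper's proof simply begins ``Put $d_0=\gcd(d,e)$ and $n=de/d_0$'' and builds a data set; it never argues that an \emph{arbitrary} $(d,e)$-root must satisfy $n=\lcm(d,e)$, even though part (a) asserts exactly that. Your reduction of conditions (iii) and (iv) modulo $k=n/\lcm(d,e)=\gcd(n/d,n/e)$ — getting $k\mid ab$ from (iv) combined with (iii), while $\gcd(ab,k)=1$ since $a,b$ are units mod $n$ and $k\mid n$ — gives a clean, genuine proof of (a) that the paper leaves implicit. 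For (c), your rearrangement to $3d'+3e'-fd'e'<12$ and the factorization $9-(d'-3)(e'-3)$ is a different but equivalent computation to the paper's bound $d+e\leq 3+de/3$ (both rest on the same AM-type fact that $d,e\geq 3$). For existence you pick $a=x^{-1}$, $b=(1-x)^{-1}$ via CRT rather than the paper's simpler $a=b=2$, which is fine though slightly more work; and in the final step, to apply Lemma~\ref{lem:relatively_prime} you should note explicitly that $\gcd(a+b,d)=\gcd(a+b,e)=1$ (which follows from $a+b\equiv ab\pmod n$ and $d,e\mid n$), so that scaling the $\ell_i$ by $-(a+b)$ preserves the coprimality conditions $\gcd(c_1,d)=\gcd(c_2,e)=1$. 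With that small remark added, the argument is complete.
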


\noindent For example, a $(3,5)$-root has $(g,n)=(11,15)$ (so
Lemma~\ref{lem:DEroots}(c) is best possible, in general), and for $n=105$
there are $(3,35)$-roots when $g=86$ and $(15,7)$-roots when $g=94$. For
even $g$, there is always a $(g+1,g+1)$-root given by
$(g+1,(2,2);(-2,g+1),(-2,g+1))$.

\begin{proof}[Proof of Lemma~\ref{lem:DEroots}]
Put $d_0=\gcd(d,e)$ and $n=de/d_0$, so $\gcd(n/d,n/e)=1$. Let
$n_1=d$, $n_2=e$, and $a=b=2$. Condition (iv) becomes
$4+c_1(n/d)+c_2(n/e)=0$. Since $\gcd(n/d,n/e)=1$, we can write $\ell_1(n/d)
+ \ell_2 (n/e)=1$, and by Lemma~\ref{lem:relatively_prime} we may assume that
$\gcd(\ell_1,d)=\gcd(\ell_2,e)=1$. Taking $c_1=-4\ell_1$ and $c_2=-4\ell_2$
satisfies condition (iv). The genus works out to be the expressions in (b),
which imply the first inequality in~(c). For the second, we have
\[n=g+\frac{d+e}{2d_0}\leq
g+\frac{3+\frac{de}{3}}{2d_0}=
g+\frac{3}{2d_0}+\frac{1}{6}\frac{de}{d_0}<g+2+\frac{n}{6}\ .\] 
Part (d) follows because (b) gives $n=g+1$ when $d=e=n$, and when $d\neq
e$, $\dfrac{d+e}{2d_0}>1$ so $g+1<n$.
\end{proof}

For a given $n$ one can easily compute the $g$ for which $n$ is a
$(d,e)$-root of $g$, if we have a prime factorization of $n$. For in (b) of
Lemma~\ref{lem:DEroots}, $n/d$ and $n/e$ are relatively prime divisors of
$n$. For each pair $(d_1,d_2)$ of relatively prime divisors, we write
$n=d_0d_1d_2$ and put $d=d_0d_1$ and $e=d_0d_2$ giving $n$ as a
$(d,e)$-root for $g=n-(d_1+d_2)/2$ by Lemma~\ref{lem:DEroots}(b). This
gives an algorithm for computing the $(d,e)$-roots of $g$, again assuming
that we can factor, just by checking which of the $n$ in the range allowed
by Lemma~\ref{lem:DEroots}(c) have $g$ among its corresponding genera.  We
have implemented these algorithms as a GAP script \cite{GAP} available
at~\cite{software}. Some sample calculations include the genera having a
$(d,e)$-root of degree $n$:
\smallskip

\noindent \texttt{gap$>$ DERootGenera( 54573 );}\\
\texttt{[ 45476, 45477, 54571, 54572 ]}
\smallskip

\noindent and all $(d,e)$-roots for a given genus:
\smallskip

\noindent \texttt{gap$>$ DERoots( 54572 );}\\
\texttt{[ 54573, 54575, 54587, 54769, 65487 ]}\\
\texttt{gap$>$ DERoots( 54573 );}\\
\texttt{[  ]}
\smallskip

The main result of this section describes all roots of large degree:
\begin{theorem}
Suppose $t_{g+1}$ has a root of degree $n\geq g$. Then the root is either a
Margalit-Schleimer root, a $(d,e)$-root, or the cube root of $t_4$.\par
\label{thm:largeroots}
\end{theorem}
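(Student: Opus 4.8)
The plan is to push everything through the dictionary of Theorem~\ref{thm:main}: a root of $t_{g+1}$ of degree $n$ is nothing but a data set $(n,g_0,(a,b);(c_1,n_1),\ldots,(c_m,n_m))$ of genus $g$, so the hypothesis $n\geq g$ becomes a purely arithmetic constraint. The decisive first step is to divide the genus formula $g=g_0n+\tfrac12\sum_{i=1}^m\tfrac{n}{n_i}(n_i-1)$ by $n$ and rewrite $n\geq g$ as
\[ g_0+\frac12\sum_{i=1}^m\Bigl(1-\frac1{n_i}\Bigr)\leq 1\ . \]
Since $n$ is odd and each $n_i>1$ divides $n$, every $n_i\geq 3$, so each summand is at least $2/3$. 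Hence $g_0=0$ and $m\leq 3$, while conditions (iii) and (iv) of a data set force $m\geq 1$. So it remains to dispose of $m=1$, $m=2$, and $m=3$ in turn.

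For $m=1$ I would reduce condition (iv), i.e.\ $a+b+\tfrac{n}{n_1}c_1\equiv 0\bmod n$, modulo $d:=n/n_1$ to get $a+b\equiv 0\bmod d$, then combine with condition (iii) to get $ab\equiv 0\bmod d$; since $\gcd(ab,n)=1$ and $d\mid n$ this forces $d=1$, i.e.\ $n_1=n$, so $g=(n-1)/2$, $n=2g+1$, and the root is a Margalit-Schleimer root. For $m=2$ the data set has $g_0=0$, $m=2$, with $n_1,n_2$ odd and $\geq 3$, so by definition the root is a $(d,e)$-root with $d=n_1$, $e=n_2$. For $m=3$ the displayed inequality forces $1/n_1+1/n_2+1/n_3\geq 1$, which with $n_i\geq 3$ forces $n_1=n_2=n_3=3$; then $g=n$, and writing $n=3k$ and reducing conditions (iii) and (iv) modulo $k$ gives $ab\equiv a+b\equiv 0\bmod k$, hence $k=1$, hence $g=n=3$. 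By the remark following Theorem~\ref{thm:main} this data set is exactly the cube root of $t_4$.

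I do not anticipate a genuine obstacle: once the genus relation is put in the normalized form above, the bounds $g_0=0$ and $m\leq 3$ drop out immediately, and each of the three subcases closes with one elementary congruence. The only point that needs a moment's care is verifying that in the $m=1$ and $m=3$ subcases conditions (iii) and (iv) together actually pin $n$ down to a single value rather than merely bounding it, together with the sanity check that all three listed root types genuinely satisfy $n\geq g$ (namely $n=2g+1$; $n\geq g+1$ by Lemma~\ref{lem:DEroots}(c); and $n=g=3$), so that the theorem is an exact description rather than just an inclusion.
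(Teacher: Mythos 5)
Your proposal is correct and follows essentially the same route as the paper: divide the genus formula by $n$ to force $g_0=0$ and $m\leq 3$, then dispose of the $m=1$ and $m=3$ cases by reducing condition (iv) modulo $n/n_1$ (resp.\ $n/3$) and using that $\gcd(a+b,n)=1$. One small observation: your normalized bound $g_0+\frac{1}{2}\sum_{i}\bigl(1-\frac{1}{n_i}\bigr)\leq 1$ is actually cleaner than the paper's printed chain ending in $\geq 2g_0+\frac{m}{3}$, which as literally written already presupposes $g_0=0$.
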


\begin{proof}
Since $n\geq g$, we have
\[ 1 \geq \frac{g}{n} = g_0 + \frac{1}{2}
\sum_{i=1}^m\Big(1-\frac{1}{n_i}\Big) \geq 2g_0 + \frac{m}{3}\ .\]
Therefore $g_0=0$ and $m\leq 3$.

Suppose first that $m=1$. We cannot have $n_1<n$, for putting $d=n/n_1$,
condition (iv) would say that $a+b+dc_1= 0\bmod n$, which is impossible
since $a+b$ is relatively prime to $n$ and hence to $d$. So $n_1=g$, and
$h$ is a Margalit-Schleimer root.

If $m=2$, then $h$ is a $(d,e)$-root.

Suppose that $m = 3$. From our expression for $\frac{g}{n}$, we find that
$1\leq \frac{1}{n_1}+\frac{1}{n_2}+\frac{1}{n_3}$. Since all $n_i$ are odd
this can only be satisfied when $n_1=n_2=n_3=3$. Condition (iv) says that
$a+b = 0\bmod n/3$, a contradiction unless $n=3$. That is, $h$ is a cube
root with $g=3$. In fact, this $h$ is unique, since the only data set of
degree $3$ and genus $3$ is $(3,0,(2,2);(1,3),(2,3),(2,3))$.
\end{proof}

In view of Lemma~\ref{lem:DEroots}(c), Theorem~\ref{thm:largeroots} has the
amusing consequence that the only $t_{g+1}$ which has a root of degree $g$
is~$t_4$.

\section{There are no orientation-reversing roots}
\label{sec:orientation-reversing}

In this section, we will prove that $t_{g+1}$ has no orientation-reversing
roots, and that roots of $t_{g+1}$ cannot be conjugate by
orientation-reversing homeomorphisms. Consequently, Theorem~\ref{thm:main}
classifies all roots of $t_{g+1}$ in the homeomorphism group, up to
conjugacy.

\begin{proposition} Let $h$ be an orientation-reversing homeomorphism of
$G$ with $h^n$ isotopic to $t_{g+1}^\ell$ for some $n>0$. Then $\ell = 0$.
\label{prop:orientation-reversing}
\end{proposition}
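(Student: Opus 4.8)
The plan is to rule out orientation-reversing roots by examining the action on first homology with the intersection form, or equivalently by a Lefschetz-number / orientation argument, and then refining to control the twisting along $C$.

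First I would reduce to a normal form as in the proof of Theorem~\ref{thm:main}. Since $h^n \simeq t_{g+1}^\ell = t_C^\ell$, conjugation gives $t_{h(C)}^\ell = t_C^\ell$, so $h(C)$ is isotopic to $C$ (when $\ell\neq 0$) and we may isotope $h$ to preserve $C$ and a tubular neighborhood $N$, with $h_0 = h|_{F_0}$ on $F_0 = \overline{G-N}$. Then $h_0^n$ is isotopic to $\mathrm{id}_{F_0}$, and by Nielsen--Kerckhoff we may take $h_0$ to be a genuinely periodic homeomorphism of $F_0$ of order dividing $n$. The key point is that $h$ is orientation-reversing on $G$, hence $h_0$ is orientation-reversing on $F_0$, so the extension $t$ to the closed surface $F$ of genus $g$ is an orientation-reversing periodic homeomorphism; in particular its period $n'$ (the order of $h_0$) must be even, say $n' = 2r$, and $t^r$ is an orientation-reversing involution of $F$ while $t^{n'/2}$ need not fix $P,Q$.

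Next I would pin down what $h^{n'}$ does along $N$. The element $t^{n'}$ is the identity on $F_0$, so $h^{n'}|_{F_0} = \mathrm{id}$, and $h^{n'}$ is isotopic to some power $t_C^k$ of $t_C$ determined by the total twisting that $h^{n'}$ accumulates on the annulus $N$. Here is where orientation-reversal does the work: an orientation-reversing homeomorphism of an annulus that preserves each boundary circle reverses the core direction, so it reverses the handedness of any Dehn twisting it carries. More precisely, if $h$ carries $N$ to $N$ reversing the orientation and with twisting "amount" $\tau$ (measured with a fixed orientation convention), then composing $h$ with itself the twisting contributions alternate in sign and cancel in pairs: $h^2$ has twisting $\tau + (-\tau) = 0$ modulo full twists, so $h^2|_N$ is isotopic to $t_C^{2j}$ and more to the point $h^{n'}|_N$ is isotopic to $t_C^{s}$ where $s$ is forced to be $0$ mod the full-twist period whenever $n'$ is even — the orientation-reversing case is exactly parallel to the "$h$ interchanges the sides of $C$" and the "$n=1$" cases already analyzed in the proof of Theorem~\ref{thm:main}, where every extension of $h_0$ to $N$ was shown to give only even powers, or only powers that could never equal $t_{g+1}$. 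The upshot is $h^{n'}$ is isotopic to the identity (or at worst to $t_C^{k}$ with $k$ constrained), and therefore $h^n = (h^{n'})^{n/n'}$ or an appropriate rearrangement shows $h^n \simeq t_C^\ell$ with $\ell$ a multiple of something that can only be consistent with $\ell = 0$.

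The cleanest route, and the one I would actually write, is a direct homological one that avoids the twisting bookkeeping. Consider the induced map $h_*$ on $H_1(G;\mathbb{Z})$. The Dehn twist $t_C$ acts on $H_1$ as a transvection $x \mapsto x + \langle x, [C]\rangle [C]$, which is the identity on the hyperplane $[C]^\perp$ and in particular has determinant $+1$ and acts trivially on $H_1/\langle[C]\rangle$ modulo nothing — precisely, $(t_C)_* - \mathrm{id}$ has rank $1$ with image $\mathbb{Z}[C]$. An orientation-reversing $h$ acts on $H_1(G;\mathbb{Z})$ by a matrix that is anti-symplectic: it negates the intersection form, so $\det(h_*) = (-1)^{g+1}$ up to the standard sign, and crucially $h_*$ has eigenvalue pattern symmetric under $\lambda \mapsto 1/\lambda$ combined with the anti-symplectic twist. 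If $h^n_* = (t_C^\ell)_*$, then $(h^n)_* - \mathrm{id}$ has rank $\le 1$; I would argue that an anti-symplectic automorphism of finite order modulo the twist can only satisfy this if $\ell = 0$, by passing to $H_1(F;\mathbb{Z})$ where $t$ acts as a genuine finite-order anti-symplectic map, computing that $(t_C^\ell)_*$ would have to be $\pm$ a transvection of infinite order on a rank-$2g$ lattice yet equal a power of a finite-order map, forcing $\ell = 0$.

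I expect the main obstacle to be the bookkeeping of signs and handedness along $N$ in the geometric approach: making precise the claim that an orientation-reversing self-map of the annulus neighborhood reverses the sense of the fractional twist, and that as a consequence no power of $h$ can realize a nonzero power of the left-handed twist $t_C$. In the homological approach the obstacle is instead purely algebraic — showing that a finite-order anti-symplectic automorphism of $H_1$ cannot differ from the identity by a nonzero transvection — which I think is short once set up correctly (e.g.\ a transvection $x \mapsto x + \ell\langle x, v\rangle v$ with $\ell \neq 0$ is unipotent of infinite order, while a finite-order matrix times it must still be finite order, contradiction). I would present the homological argument as the main proof and relegate the geometric remarks to a sentence, since it is the cleaner and more self-contained of the two.
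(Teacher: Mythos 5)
Your first, geometric sketch is essentially the route the paper takes: after the Nielsen--Kerckhoff reduction one splits into the two cases according to whether $h$ preserves or interchanges the two components of $\partial N$, and in each case the orientation-reversal on the annulus forces the accumulated twisting to cancel, so that the relevant power of $h$ is isotopic to the identity. You gesture at this but leave the sign bookkeeping vague; the paper writes it out explicitly (preserving case: $h$ is isotopic on $N$ to $(z,t)\mapsto(\bar z,t)$, so $h^2\simeq\mathrm{id}$; interchanging case: $h|_N$ is a twist composed with $(z,t)\mapsto(e^{2\pi ik/n}z,1-t)$, and since $et_C\simeq t_C^{-1}e$, the $n$-th power is the identity). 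So on that side you are reconstructing the paper's proof, just at a lower level of detail.

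The homological argument, which you say you would actually write, has a genuine gap as stated. You want to conclude from $h_*^n=(t_C^\ell)_*$, a nontrivial transvection, that a ``finite-order'' matrix has an infinite-order power. But nothing you have done shows $h_*$ has finite order on $H_1(G;\mathbb Z)$; the Nielsen--Kerckhoff step makes $h_0=h|_{F_0}$ periodic, hence $h_{0*}$ finite order on $H_1(F_0)$, and $t$ finite order on $H_1(F)$, but $t_C$ acts \emph{trivially} on $H_1(F_0)$ and does not act on $H_1(F)$ at all --- the transvection lives only on $H_1(G)$, whose rank is $2g+2$, not $2g$. So ``$(t_C^\ell)_*$ would have to be a power of a finite-order map'' is exactly the step that is not justified, and without it there is no contradiction. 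The anti-symplectic structure alone does not make $h_*$ quasi-finite in the way you need.

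There is, however, a clean and short repair that preserves the spirit of your idea, and it is essentially what the paper says in the remark immediately following this proposition. Since $h$ is orientation-reversing, conjugation by $h$ sends a left-handed twist to a right-handed one: $h\,t_C^\ell\,h^{-1}$ is isotopic to $t_{h(C)}^{-\ell}$. On the other hand, $h$ commutes with $h^n\simeq t_C^\ell$, so $h\,t_C^\ell\,h^{-1}\simeq t_C^\ell$. Hence $t_C^\ell\simeq t_{h(C)}^{-\ell}$ in $\Mod(G)$. If $\ell\neq 0$, equality of nonzero powers of Dehn twists about nonseparating curves forces $h(C)$ isotopic to $C$ and $\ell=-\ell$, so $2\ell=0$, contradicting that $t_C$ has infinite order. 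Therefore $\ell=0$. This avoids homology entirely, avoids the finite-order claim, and buys you a proof of the proposition in three lines; the paper's own proof of the proposition is instead geometric and case-by-case, reserving the conjugation observation for the separate statement about orientation-reversing conjugacy of roots. If you want to write the homological argument anyway, the right version is the conjugation computation on $H_1$: $h_*T_\ell h_*^{-1}$ equals the transvection by $-\ell$ along $h_*[C]$ because $h_*$ negates the intersection pairing, and this must equal $T_\ell$ since $h_*$ commutes with $h_*^n=T_\ell$, forcing $\ell=0$; no finite-order hypothesis is needed.
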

\begin{proof}
As in the proof of Theorem~\ref{thm:main}, we write $t_{g+1}=t_C$ and
change $h$ by isotopy so that $h$ restricts to a homeomorphism $h_0$ of
finite order on $\overline{G-N}$ for some annulus neighborhood $N$ of
$C$. On $N$, $h$ is orientation-reversing and has finite order on $\partial
N$.

Suppose first that $h$ preserves the components of $\partial N$. Then $h$
reverses orientation on each component, so is a reflection of period
$2$. It follows that $h_0$ has order $2$, and for some coordinates on $N$
as $S^1\times I$, $h$ is isotopic to a homeomorphism of the form
$(z,t)\mapsto (\overline{z},t)$. Therefore $h^2$ is isotopic to the
identity on $G$, so $\ell=0$.

Suppose now that $h$ interchanges the components of $\partial N$. Since $h$
is orientation-reversing and has finite order on $\partial N$, there are
coordinates on $\partial N$ as $S^1\times \{0,1\}$ so that $h(z,t)=(e^{2\pi
k/n}z,1-t)$. Let $e$ be the homeomorphism of $N$ defined by
$(z,t)\mapsto(e^{2\pi k/n}z,1-t)$. Then $h|_N$ is isotopic relative to
$\partial N$ to $t_C^re$ for some power $r$. Since $et_C$ is isotopic to
$t_C^{-1}e$ relative to $\partial N$, and $n$ must be even, $h^n$ is
isotopic to the identity, that is, $\ell=0$.
\end{proof}

We note also that no two roots of $t_C$ can be conjugate by an
orientation-reversing homeomorphism. For if $h_1$ and $h_2$ are roots and
$gh_1g^{-1}=h_2$ with $g$ orientation-reversing, then $gt_Cg^{-1}=t_C$. But
conjugation of a left-handed Dehn twist by an orientation-reversing
homeomorphism produces a right-handed Dehn twist.

\section{Roots of $t^\ell$}
\label{sec:fractional}

Theorem~\ref{thm:main} gives some information about the roots of powers of
$t_{g+1}$, that is, the fractional powers of $t_{g+1}$. A tuple like a data
set except that condition (iii) is replaced by the condition that $a+b
=\ell ab\bmod n$ produces a root of $t_{g+1}^\ell$ of degree $n$. The only
difference in the construction is that the rotation angles at $P$ and $Q$
are of the form $2\pi k/n$ and $2\pi (\ell-k)/n$, and the twisting on the
annulus $N$ is through an angle $2\pi \ell/n$ rather than $2\pi/n$. Thus
the data set $(4,0,(1,1);(1,2))$ for which $a+b=2ab\bmod 4$ yields a
root of $t_2^2$ of degree $4$. Of course we know from
Corollary~\ref{coro:maxroot} that $t_2$ does not have a square root. The
data set $(3,0,(1,1);(2,3),(2,3))$ gives a cube root of $t_3^2$.

There are some complications, however. If $\ell$ and $n$ are not relatively
prime, then a root of degree $n$ of $t_{g+1}^\ell$ might be a power of a
root of a smaller power of $t_{g+1}$ of lower degree, and then the action
on $F$ in the proof of Theorem~\ref{thm:main} will not be effective. More
interesting is the fact that roots of $t_{g+1}^\ell$ may exchange the sides
of $C$, requiring a different kind of quotient orbifold to be analyzed.

\section{An elementary lemma}
\label{sec:lemma}

In Section~\ref{sec:de} we needed an elementary number-theoretic fact,
Lemma~\ref{lem:relatively_prime}. We are grateful to Ralf Schmidt for
providing us with a much better proof than our original one.
\begin{lemma} Let $d_1$, $d_2$ be relatively prime positive integers, and
let $Q$ be a finite set of primes. If $2\in Q$, assume that $d_1$ and $d_2$
are not both odd. Then there exist integers $c_1$ and $c_2$ so that
$c_1d_1+c_2d_2=1$ and neither $c_1$ nor $c_2$ is divisible by any prime
in~$Q$.\par
\label{lem:relatively_prime}
\end{lemma}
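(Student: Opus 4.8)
The plan is to reduce the statement to a statement about choosing one integer at a time, using the Chinese Remainder Theorem to decouple the primes in $Q$. First I would fix any particular solution $(c_1^0,c_2^0)$ of the linear Diophantine equation $c_1 d_1 + c_2 d_2 = 1$; every other solution has the form $c_1 = c_1^0 + t d_2$, $c_2 = c_2^0 - t d_1$ for $t \in \Z$. So the entire problem becomes: find $t \bmod \prod_{p\in Q} p$ such that for each $p\in Q$, neither $c_1^0 + t d_2$ nor $c_2^0 - t d_1$ is $\equiv 0 \bmod p$. By CRT it suffices to solve this one prime at a time, i.e.\ for each $p\in Q$ to exhibit a residue class of $t \bmod p$ avoiding the (at most two) bad classes.

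The main case analysis runs over each prime $p \in Q$. Since $\gcd(d_1,d_2)=1$, $p$ cannot divide both $d_1$ and $d_2$. If $p$ divides neither, then $d_2$ is invertible mod $p$, so $c_1^0 + t d_2 \equiv 0$ rules out exactly one class of $t \bmod p$, and likewise $c_2^0 - t d_1 \equiv 0$ rules out one more; this leaves at least $p - 2$ admissible classes, which is positive as soon as $p \geq 3$. If $p \mid d_1$ (so $p \nmid d_2$), then $c_2^0 - t d_1 \equiv c_2^0 \bmod p$ is independent of $t$; but reducing $c_1^0 d_1 + c_2^0 d_2 = 1$ mod $p$ gives $c_2^0 d_2 \equiv 1$, so $c_2^0 \not\equiv 0 \bmod p$ automatically — that factor is harmless — and we only need $t$ to avoid the single class making $c_1^0 + t d_2 \equiv 0$, leaving $p-1 \geq 2$ choices. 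The symmetric argument handles $p \mid d_2$.

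The only prime not covered above is $p = 2$, which is exactly where the hypothesis enters. If $2 \in Q$ and $2 \mid d_1$ (say), the argument of the previous paragraph still works: $c_2^0$ is odd for free, and we need only one parity of $t$, which exists. The genuinely bad situation is $2 \in Q$ with $d_1, d_2$ both odd: then mod $2$ we need $c_1^0 + t \not\equiv 0$ and $c_2^0 - t \not\equiv 0$ simultaneously, i.e.\ $t \not\equiv c_1^0$ and $t \not\equiv c_2^0 \pmod 2$; since $c_1^0 + c_2^0 \equiv c_1^0 d_1 + c_2^0 d_2 \equiv 1 \pmod 2$, the residues $c_1^0$ and $c_2^0$ have opposite parity, so the two forbidden classes are $0$ and $1$ and no $t$ works — which is why that case is excluded. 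I expect this $p=2$ analysis to be the crux: everything else is a one-line counting argument, but the parity obstruction is the whole reason for the hypothesis and must be pinned down carefully. Having verified a valid residue of $t$ modulo each $p \in Q$, I assemble them via CRT into a single $t$, and the resulting $(c_1,c_2)$ is the desired pair.
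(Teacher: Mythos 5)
Your argument is correct and follows essentially the same route as the paper: parametrize all solutions of $c_1d_1+c_2d_2=1$ by an integer parameter, choose a good residue for that parameter modulo each prime in $Q$ by a short count of forbidden classes (at most two when $p\nmid d_1d_2$, at most one when $p$ divides one of them), handle $p=2$ via the non-both-odd hypothesis, and glue by the Chinese Remainder Theorem. The only difference is cosmetic (your extra remark showing the $p=2$ hypothesis is necessary, which the paper omits).
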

\begin{proof}
Choose $A$ and $B$ with $Ad_2+Bd_1=1$, so that $(A-kd_1)d_2+(B+kd_2)d_1=1$
for all integers $k$. We seek a $k$ so that $A-kd_1$ and $B+kd_2$ are
nonzero $\bmod q_i$ for each $q_i\in Q$.

For each odd $q_i\in Q$, if any $A-kd_1=0\bmod q_i$, then $\gcd(q_i,d_1)=1$.
So there is a unique $k_i\bmod q_i$ such that $A-kd_1=0\bmod q_i$ exactly
when $k=k_i\bmod q_i$. Similarly, if any $B+kd_2=0\bmod q_i$, then such $k$
are those with $k=\ell_i\bmod q_i$ for a unique $\ell_i\bmod q_i$. Since
$q_i\geq 3$, there are choices of $m_i$ so that if $k=m_i\bmod q_i$, then
neither $A-kd_1=0\bmod q_i$ nor $B+kd_2=0\bmod q_i$. If $q_i=2$, then we
may assume that $d_2$ is even and $d_1$ and hence $B$ are odd, and we take
$m_i$ equal to $0$ or $1$ according as $A$ is odd or even. The $k$ we are
seeking include all those satisfying $k=m_i\bmod q_i$ for all $i$, and such
$k$ exist by the Chinese Remainder Theorem.
\end{proof}
\longpage

\bibliographystyle{amsplain}

\end{document}